\tikzset{>=latex}
\newtheoremstyle{def}
     {10pt}
     {10pt}
     {}
     {}
     {\rmfamily\bfseries\upshape}
     {.}
     {.5em}
     {}
 \theoremstyle{def}
 \newtheorem{definition}{Definition}[subsection]
\newtheorem{remark}[definition]{Remark}
\newtheoremstyle{thm}
     {20pt}
     {10pt}
     {\it}
     {}
     {\rmfamily\bfseries\upshape}
     {.}
     {.5em}
     {}
\theoremstyle{thm}
\newtheorem{theorem}[definition]{Theorem}
\newtheorem{lemma}[definition]{Lemma}
\newtheorem{proposition}[definition]{Proposition}
\DeclareMathOperator{\End}{End}
\DeclareMathOperator{\Aut}{Aut}
\DeclareMathOperator{\Spec}{Spec}
\DeclareMathOperator{\Hom}{Hom}
\DeclareMathOperator{\id}{id}
\DeclareMathOperator{\pr}{pr}
\DeclareMathOperator{\Pic}{Pic}
\newcommand{\smalltwobytwo}[4]{
\bigl( \begin{smallmatrix} 
  #1 & #2\\
  #3 & #4 
\end{smallmatrix} \bigr)}
\newcommand{\cL}{\mathcal{L}}
\newcommand{\cM}{\mathcal{M}}
\newcommand{\cO}{\mathcal{O}}
\newcommand{\sA}{\mathscr{A}}
\newcommand{\sM}{\mathscr{M}}
\newcommand{\CC}{\mathbb{C}}
\newcommand{\GG}{\mathbb{G}}
\newcommand{\ZZ}{\mathbb{Z}}
\def\@seccntformat#1{%
  \protect\textup{\protect\@secnumfont
    \ifnum\pdfstrcmp{subsection}{#1}=0 \bfseries\fi
    \csname the#1\endcsname
    \protect\@secnumpunct
  }%
}  
\tikzset{>=latex}
\begin{document}

\title{The algebraic functional equation of Riemann's theta function}

\author{Luca Candelori}
\date{}
\maketitle

\begin{abstract}
We give an algebraic analog of the functional equation of Riemann's theta function. More precisely, we define a `theta multiplier' line bundle over the moduli stack of principally polarized abelian schemes with theta characteristic and prove that its dual is isomorphic to the determinant bundle over the moduli stack. We do so by explicitly computing with Picard groups over the moduli stack. This is all done over the ring $R=\ZZ[1/2,i]$: passing to the complex numbers, we recover the classical functional equation. 
\end{abstract}

\section{Introduction}

In 1829 C.G.J Jacobi introduced the theta function $\vartheta(\tau) = \sum_{n\in \ZZ} e^{\pi i n^2 \tau}$ and proved the remarkable transformation formula
\begin{equation}
\label{equation:FEqDim1}
\vartheta\left(\frac{-1}{\tau}\right) = \sqrt{\frac{\tau}{i}}\,\vartheta(\tau), \quad \tau \in \mathfrak{h} := \{ z\in \CC: \mathrm{Im}[z]>0\},
\end{equation} 
where $\sqrt{\cdot}$ is defined using the principal branch of the logarithm. Along with the trivial identity $\vartheta(\tau + 2) = \vartheta(\tau)$, equation \eqref{equation:FEqDim1} expresses the fact that $\vartheta$ is a {\em modular form} of weight 1/2 on the {\em theta group} $\Gamma_1^+(1,2)\subseteq \mathrm{SL}_2(\ZZ)$, a level 2 congruence subgroup. Much effort has been devoted in recent years to study the algebro-geometric nature of modular forms, which has led to great progress in understanding their Fourier coefficients and their relation to Galois representations. Yet despite all this success, statements such as \eqref{equation:FEqDim1} have remained rather mysterious from an algebro-geometric point of view, their algebraic nature obscured by the use of analytic techniques such as Poisson summation. Given the importance of theta functions and of modular forms of half-integral weight in general, this has to be viewed as a serious gap in our understanding of the theory. It is the aim of this paper to fill this gap. 

The natural geometric framework to study this problem is the moduli space of principally polarized abelian varieties of dimension $g$ with theta characteristic. Over the complex numbers this moduli space is given by the analytic quotient stack $\sA^{\pm}_{g,\rm{an}}:=\left[\Gamma^{\pm}_g(1,2)\backslash \mathfrak{h}_g\right]$, where $\mathfrak{h}_g$ is the $g$-dimensional Siegel upper half-space and, according to whether the characteristic is even (+) or odd (-), the subgroups $\Gamma^{\pm}_g(1,2) \subseteq \mathrm{Sp}_{2g}(\ZZ)$ are the {\em theta groups}, generalizing $\Gamma_1^+(1,2)$ above (e.g. \cite{MoretBailly:PinceauxAbv}, VIII, \S 3.4.1, \S 3.4.2). For example, in the even case we have 
\begin{equation}
\label{equation:thetaGroup}
\Gamma^+_g(1,2) := \left\{ \smalltwobytwo{A}{B}{C}{D} \in \mathrm{Sp}_{2g}(\ZZ): AB^t \text{ and } CD^t \text{ have even diagonal } \right\},
\end{equation} 
which is the group of transformations of the {\em Riemann theta function} $\vartheta_g(\tau)$. This is the analytic function of $\tau \in \mathfrak{h}_g$ given by 
$$
\vartheta_g(\tau):= \sum_{n\in \ZZ^g} e^{\pi i\, n^t\tau n }, 
$$
a higher-dimensional generalization of $\vartheta(\tau)$. In this geometric setting, the theta function $\vartheta_g(\tau)$ is the value at zero of a section of a certain relatively ample, normalized, symmetric line bundle $\Theta$ of degree 1 over the universal abelian variety $\pi:\sA \rightarrow \Gamma^{+}_g(1,2)\backslash \mathfrak{h}_g$, whose isomorphism class is determined by the theta characteristic. In particular, $\vartheta_g$ is a section of the dual $(\pi_*\Theta)^{-1}$ of the line bundle $\pi_*\Theta$. A similar theta function and corresponding geometric analog can be obtained in the case of odd theta characteristics.

Theta functions such as $\vartheta_g(\tau)$ have well-known functional equations analogous to the $g=1$ case \eqref{equation:FEqDim1}. In this article, we would like to explain the relationship between these transformation laws and the moduli problem at hand. Moreover, we want to do this {\em algebraically}: namely, we work over the algebraic moduli stack $\sA^{\pm}_g$ of principally polarized abelian schemes with theta characteristic. This is an algebraic stack which is smooth over $\ZZ[1/2]$ (\cite{MoretBailly:PinceauxAbv}, \S VIII.3.2.4), and whose analytification is the analytic stack $\sA^{\pm}_{g,\rm{an}}$ defined above. The formation of $\vartheta_g$ can be carried out in this algebraic setting as well. However, any analytic statement such as \eqref{equation:FEqDim1} seems now meaningless, since  $\vartheta_g$ is no longer an analytic function on $\mathfrak{h}_g$, but rather a section of an algebraic line bundle over an algebraic stack. Our goal then is to translate the functional equation into a statement which makes sense over any commutative ring, and not just over the complex numbers.   

The first results in this direction have been obtained in \cite{MoretBailly:PinceauxAbv}, where the Grothendieck-Riemann-Roch Theorem is employed to show that the {\em determinant bundle}
$$
\Delta(\Theta):= \pi_*\Theta^{\otimes 2}\otimes\underline{\omega},
$$ 
where $\underline{\omega}$ is the Hodge bundle of $\pi$, is a canonical torsion element in $\Pic(\sA^{\pm}_g)$ (this result is the `canonical key formula' of \cite{MoretBailly:PinceauxAbv}, App. 1). This fact alone shows that some high power $r$ of the algebraic Riemann theta function $\vartheta_g$ is an algebraic Siegel modular form of weight $r/2$, i.e. a section of $\underline{\omega}^{r/2}$. This result was later improved by \cite{FaltingsChai}, Theorem I.5.1, who show that the smallest such $r$ is 8 and essentially that $\Delta(\Theta)$ has the structure of a $\mu_4$-torsor over $\sA^{\pm}_g$. Moreover, it was shown in \cite{MB:FEqn} that the trivialization of $\Delta(\Theta)^{\otimes 4}$ can be computed analytically using the functional equation of Riemann's theta function. In fact, the functional equation \eqref{equation:FEqDim1} and its higher-dimensional analogs, together with the above-mentioned results, suggest that there must exist a natural `theta multiplier bundle' $\cM(\Theta)$, a $\mu_4$-torsor over $\sA^{\pm}_g$, and an isomorphism
\begin{equation}
\label{equation:keyFormulaIntro}
\Delta(\Theta)\stackrel{\simeq}\longrightarrow  \cM(\Theta)^{-1}
\end{equation}
of the underlying line bundles over $\sA^{\pm}_g$. Over $\CC$, $\cM(\Theta)$ must correspond to the character $$\lambda:\Gamma(1,2)^{\pm}_g\rightarrow \mu_4 \subseteq \CC^{\times}$$giving the 4-th roots of unity appearing in the transformation laws of $\vartheta_g^2(\tau)$. We construct the theta multiplier bundle $\cM(\Theta)$ in Section \ref{section:thetaMultiplierBundle} and prove the isomorphism \eqref{equation:keyFormulaIntro} in Section \ref{section:canonicalKeyFormula}. Moreover, we show in Section \ref{section:functionalEquation} that \eqref{equation:keyFormulaIntro} is given analytically by the functional equation. The current article can thus be viewed as a refinement of \cite{FaltingsChai}, Theorem I.5.1 and \cite{MB:FEqn}, and it was very much inspired by these works.   

Our construction of theta multiplier bundles is based on a geometric interpretation of $\lambda$, which could be of independent interest. This interpretation is due to Pierre Deligne (private communication), and is given in Section \ref{section:thetaMultipliers}. In the even case, a different geometric interpretation of $\lambda$ had been already given in the beautiful paper \cite{JM}: that construction was also one of the original inspirations for this article and  the relation with our $\lambda$ is given at the end of Section \ref{section:thetaMultipliers}. 

There are many other interesting modular forms on levels other than $\Gamma^{\pm}_g(1,2)$, whose algebraic construction can be similarly worked out. In fact, the isomorphism \eqref{equation:keyFormulaIntro} can be generalized to the case when $\Theta$ is replaced by a non-degenerate line bundle of higher degree and index, in which case \eqref{equation:keyFormulaIntro} is replaced by an isomorphism of vector bundles. This level of  generality requires the machinery of Heisenberg groups and will be presented in follow-up work. This future work will give algebraic definitions of modular form (of integral and half-integral weight) taking values in Weil representations: in particular, it will be possible to define algebraic modular forms of half-integral weight in the classical sense, that of Shimura. Those notions may then be employed, for example, as the starting point of a theory of mod $p$ and $p$-adic analogs of such very interesting types of modular forms, or to initiate an algebraic study of their Fourier coefficients.        

I would like to acknowledge the great help that I have received from Pierre Deligne while drafting this paper. The ideas of Sections \ref{section:thetaMultipliers} and \ref{section:thetaMultiplierBundle} are entirely due to him, along with many corrections he suggested from earlier drafts. In addition, this work was initiated while I was a graduate student of Henri Darmon at McGill University. I sincerely thank him for introducing me to this problem and for his generous support over the years. Also at McGill, I have enjoyed many fruitful conversations with and feedback from Eyal Goren, Niky Kamran and Cl\'{e}ment Gomez. At LSU, I must thank William J. Hoffman, Ling Long and Karl Mahlburg for their comments and suggestions. At Princeton/IAS, I must also thank Peter Sarnak and Shou-Wu Zhang for their feedback during my visit there. I would also like to thank Cameron Franc for his comments, and James Parson for his corrections from an earlier draft. Finally, I would like to acknowledge the Schulich and Pellettier families for their financial support for this research, along with the McGill and LSU Departments of Mathematics.

\section{Symplectic 4-groups with theta characteristic}
\label{section:thetaMultipliers}

\begin{definition}
A {\em symplectic 4-group} is a free $\ZZ/4\ZZ$-module $V$ of rank $2g$ together with a non-degenerate alternating bilinear form
$$
\psi: V\times V \longrightarrow \ZZ/4\ZZ.
$$
\end{definition}

Let $\overline{V}:=V/2V$ be the free $\ZZ/2\ZZ$-module of rank $2g$ obtained from $V$ by reducing the coordinates of $V$ modulo 2. The bilinear form $2\,\psi$ descends to the quotient $\overline{V}$ to give a  non-degenerate alternating (thus symmetric) bilinear form  $\overline{\psi}$ on $\overline{V}$. Suppose then we are given a function
$$
q: \overline{V}\longrightarrow \ZZ/2\ZZ \\
$$
which is a quadratic form for $\overline{\psi}$, i.e. 
$$
q(v_1+v_2)- q(v_1) - q(v_2) = \overline{\psi}(v_1,v_2), \quad\text{for all}\; v_1,v_2\in\overline{V}.
$$
\begin{definition}
The triple $(V,\psi,q)$ is called a {\em symplectic 4-group with theta characteristic}. The group $\Gamma:= \Aut(V,\psi,q)$, of $\ZZ/4\ZZ$-linear automorphisms $\varphi:V\rightarrow V$ such that $\varphi$ preserves $\psi$ and $\overline{\varphi}$ preserves $q$, is called the {\em theta group} of $(V,\psi,q)$ (cp. \cite{JM}). 
\end{definition}

Up to isomorphism, there are only two quadratic forms $q$ for each $g$. We say that $q$ is {\em even} if there exists a subspace $L\subseteq \overline{V}$ such that $q(L) = 0$ and $\dim L = g$ (i.e. $L$ is {\em maximal isotropic}), and {\em odd} otherwise. We say that the symplectic 4-group $(V,\psi)$ with theta characteristic $q$ is {\em even} or {\em odd}, according to the parity of $q$. In terms of the Arf invariant, $q$ is even if $\mathrm{Arf}(q) = 0$ and odd otherwise. Accordingly, the isomorphism class of the theta group of $(V,\psi,q)$ is entirely determined by $g$ and by the parity of $q$. There are special elements of this group which are essential to understand its structure.

\begin{definition}[\cite{JM}, \S 1]
A {\em anisotropic transvection} is any linear map $t_v \in \Gamma$ of the form
$$
t_v(x) = x + \psi(v,x)\,v,
$$
where $v$ is a vector such that $q(\overline{v}) \neq 0$. 
\end{definition}  

Note that any such anisotropic transvection $t_v$ satisfies
\begin{itemize}
\item[(i)] $t^2_v(x) = x + 2\,\psi(v,x)\,v$,
\item[(ii)] $\gamma t_v\gamma^{-1} = t_{\gamma v}$, $\;\gamma \in \Gamma$.
\end{itemize}
In particular, (i) implies that $t^2_v$ reduces to the identity modulo 2 and (ii) implies that all anisotropic transvections are conjugates of each other.

\subsection{} Given a symplectic 4-group with theta characteristic $(V,\psi,q)$, its  theta group $\Gamma$ is a group extension
\begin{equation}
\label{equation:exSeq}
0\rightarrow \Gamma(2) \rightarrow \Gamma \stackrel{\gamma\mapsto \bar{\gamma}}\longrightarrow \mathrm{O}(\overline{V},q) \rightarrow 0.
\end{equation} 
Each flanking term in this exact sequence is endowed with a natural homomorphism to a cyclic group of order 2, which we now describe. First, we have the {\em Dickson invariant} (e.g. \cite{Dieudonne}, \S 3)
$$
D_q: \mathrm{O}(\overline{V},q) \longrightarrow \ZZ/2\ZZ,
$$
defined by the action of $\mathrm{O}(\overline{V},q)$ on $Z(\mathrm{Cliff}^{+}(\overline{V},q))$. Given an isomorphism $\mathrm{O}(\overline{V},q)\simeq \mathrm{O}^{\pm}(2g,2)$, there are explicit formulas for $D_q(t)$ which are quadratic  in the entries of the matrix $t$. Alternatively, $D_q$ is given by the formula (\cite{Wilson}, \S 3.8.1)
\begin{equation}
\label{equation:DicksonFormula}
D_q(t) = \mathrm{rk}(\mathrm{id} + t) \mod 2.
\end{equation}  

Second, there is a homomorphism
$$
q: \Gamma(2) \longrightarrow \ZZ/2\ZZ,
$$
canonically induced by $q$, defined as follows. There is an isomorphism
\begin{align*}
\Gamma(2) &\stackrel{\simeq}\longrightarrow \mathfrak{sp}(\overline{V},\overline{\psi}) \\
\alpha = \id + 2\beta &\longmapsto \beta,
\end{align*}
as $\ZZ/2\ZZ$-vector spaces of rank $g(2g+1)$, and the symmetric bilinear form $\overline{\psi}$ induces a canonical isomorphism $$\mathfrak{gl}(\overline{V},\overline{\psi}) \simeq \End(\overline{V})\simeq \overline{V}^*\otimes\overline{V} \stackrel{\overline{\psi}}\simeq \overline{V}\otimes\overline{V},$$ under which $\mathfrak{sp}(\overline{V},\overline{\psi}) $ corresponds to the subspace of symmetric 2-tensors $\Sigma^2(\overline{V})$. Therefore $\Gamma(2)$ and  $\Sigma^2(\overline{V})$ are canonically isomorphic as $\ZZ/2\ZZ$-vector spaces. The quadratic form $q$ now induces a linear form
$
q: \Sigma^2(\overline{V}) \longrightarrow \ZZ/2\ZZ,
$
by the universal property of $\Sigma^2(\overline{V})$ with respect to degree 2 maps.

It turns out that the two homomorphisms $D_q$ and $q$ can be combined to construct a remarkable $\ZZ/4\ZZ$-valued character on $\Gamma:=\Aut(V,\psi,q)$:

\begin{theorem}
\label{theorem:thetaCharacter}
Let $(V,\psi,q)$ be a symplectic 4-group with theta characteristic, and let $\Gamma=\Aut(V,\psi,q)$ be its theta group. Then there is a unique group homomorphism 
$$
\lambda:\Gamma \longrightarrow \ZZ/4\ZZ,
$$
such that
\begin{itemize}
\item[(i)] $\lambda|_{\Gamma(2)} = 2\cdot q$, where $\ZZ/2\ZZ\stackrel{2\cdot}\rightarrow \ZZ/4\ZZ$ is the canonical injection, 
\item[(ii)] $\lambda(\gamma) \equiv D_q(\overline{\gamma}) \mod 2$, for all  $\gamma\in \Gamma$,
\item[(iii)] $\lambda(t_v) = 1$, for any anistropic transvection $t_v$.  
\end{itemize}

\end{theorem}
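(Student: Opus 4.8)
\emph{Proof plan.} I would prove uniqueness first and then reduce existence to a computation of (part of) the abelianization of $\Gamma$.

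\emph{Uniqueness.} The key input is that $\mathrm{O}(\overline{V},q)$ is generated by its transvections, i.e. by the images $\overline{t_v}$ of the anisotropic transvections of $\Gamma$, which holds for all $g$ apart from the single classical exception $\mathrm{O}^{+}(4,2)$ (the even case with $g=2$), treated separately. Hence $\Gamma$ is generated by $\Gamma(2)$ together with the anisotropic transvections. Since all anisotropic transvections are mutually conjugate in $\Gamma$ (as noted just before the theorem) and $\ZZ/4\ZZ$ is abelian, condition (iii) pins down $\lambda$ on every $t_v$, and condition (i) pins it down on $\Gamma(2)$; thus at most one homomorphism can satisfy (i) and (iii), a fortiori at most one can satisfy (i)--(iii). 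Moreover, once a homomorphism $\lambda$ with $\lambda|_{\Gamma(2)}=2\cdot q$ and $\lambda(t_v)=1$ has been found, (ii) is automatic: $\lambda\bmod 2$ and $\gamma\mapsto D_q(\overline\gamma)$ are homomorphisms $\Gamma\to\ZZ/2\ZZ$ agreeing on the above generators, using $D_q(\overline{t_v})=\mathrm{rk}(\mathrm{id}+\overline{t_v})=1$ from \eqref{equation:DicksonFormula}. So it remains only to construct a homomorphism $\lambda\colon\Gamma\to\ZZ/4\ZZ$ with $\lambda|_{\Gamma(2)}=2\cdot q$ and $\lambda(t_v)=1$.

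\emph{Reduction of existence.} First, $2\cdot q\colon\Gamma(2)\to 2\ZZ/4\ZZ$ is $\Gamma$-equivariant for the trivial action on the target, because under the canonical isomorphism $\Gamma(2)\simeq\Sigma^2(\overline{V})$ of $\Gamma$-modules the induced linear form $q$ is $\mathrm{O}(\overline{V},q)$-invariant (that group preserving the quadratic form). In $\Gamma^{\mathrm{ab}}$ the image of $\Gamma(2)$ is therefore the coinvariant quotient $(\Sigma^2(\overline{V}))_{\mathrm{O}(\overline{V},q)}$, one copy of $\ZZ/2\ZZ$, onto which $t_v^{2}\leftrightarrow\overline{v}\otimes\overline{v}$ maps to $q(\overline{v})=1$; and since $\Gamma^{\mathrm{ab}}=\langle[t_v]\rangle$ (the image of $\Gamma(2)$ being contained in $\langle[t_v^{2}]\rangle$), the group $\Gamma^{\mathrm{ab}}$ is cyclic of order dividing $4$. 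The required homomorphism exists if and only if
\[
\Gamma^{\mathrm{ab}}\cong\ZZ/4\ZZ ,
\]
with $[t_v]$ a generator --- equivalently $t_v^{2}\notin[\Gamma,\Gamma]$ --- for then one takes $\lambda$ to be the abelianization map followed by the isomorphism sending $[t_v]$ to $1$, and (i)--(iii) follow from the uniqueness paragraph. (In cohomological language: pushing the extension \eqref{equation:exSeq} out along $2\cdot q$ yields a central extension $0\to\ZZ/4\ZZ\to\widetilde\Gamma\to\mathrm{O}(\overline{V},q)\to 0$, and a suitable $\lambda$ is a retraction of its centre, which exists iff that extension has the correct class in $H^{2}(\mathrm{O}(\overline{V},q),\ZZ/4\ZZ)$; by the Bockstein sequence for $\ZZ/2\ZZ\xrightarrow{2}\ZZ/4\ZZ\to\ZZ/2\ZZ$ this is a concrete statement about the pushed-out class in $H^{2}(\mathrm{O}(\overline{V},q),\ZZ/2\ZZ)$.)

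\emph{The main obstacle.} The crux is precisely this last point: that $t_v^{2}\notin[\Gamma,\Gamma]$, i.e. the computation $\Gamma^{\mathrm{ab}}\cong\ZZ/4\ZZ$, i.e. the internal consistency of the evident presentation of $\Gamma$. I would attack it by writing down a Steinberg/Curtis--Tits-type presentation of $\Gamma$ with the anisotropic transvections as generators, arranged so that every defining relator has transvection-exponent-sum divisible by $4$; this localises the verification to relations supported on rank $\le 2$ and rank $3$ symplectic $4$-subgroups, a finite computation, and reduces everything to $g\le 2$ (handling the exceptional $\mathrm{O}^{+}(4,2)$ case as well, after specifying $\lambda$ on one extra generator). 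In the even case one can instead extract the statement from the construction of $\lambda$ already present in \cite{JM} and identify the two. The slickest route, presumably the one Deligne has in mind, is to construct $\lambda$ \emph{directly} from the action of $\Gamma$ on Heisenberg-type data attached to $(V,\psi,q)$ over $\ZZ/4\ZZ$ --- quadratic refinements of $\psi$ together with the associated Lagrangians and Weil indices --- which makes the homomorphism property manifest, leaving only the evaluation of the resulting formula on $\Gamma(2)$ and on one anisotropic transvection, i.e. checks (i) and (iii).
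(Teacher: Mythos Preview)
Your uniqueness argument and your reduction of existence to the statement $t_v^{2}\notin[\Gamma,\Gamma]$ (equivalently $\Gamma^{\mathrm{ab}}\cong\ZZ/4\ZZ$) are correct and efficient. But you are right to flag this last point as the ``main obstacle'': you do not actually prove it, only list possible strategies (Steinberg-type presentations, appeal to \cite{JM}, a putative Heisenberg construction). As it stands the proposal is a correct reduction followed by an unexecuted programme, so the proof is incomplete precisely at the step that carries all the content. Note also a small slip in your reduction paragraph: saying that the image of $\Gamma(2)$ in $\Gamma^{\mathrm{ab}}$ \emph{is} the coinvariant quotient already assumes what you want; a priori it is only a \emph{quotient} of the coinvariants, so all you know at that stage is $|\Gamma^{\mathrm{ab}}|\mid 4$.

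The paper resolves the obstacle by an entirely different route, and it is worth knowing. One first quotients by $\ker(q:\Gamma(2)\to\ZZ/2\ZZ)$, which is normal since $\mathrm{O}(\overline V,q)$ preserves $q$; this yields a central extension of $\mathrm{O}(\overline V,q)$ by $\ZZ/2\ZZ$. Pulling back along $\Omega(\overline V,q):=\ker D_q$ one obtains a central $\ZZ/2\ZZ$-extension of $\Omega(\overline V,q)$. For $g\geq 5$ the group $\Omega(\overline V,q)$ is simple with \emph{trivial Schur multiplier}, so this extension splits \emph{uniquely}; quotienting by the image of the splitting produces a group $G$ which is an extension of $\ZZ/2\ZZ$ by $\ZZ/2\ZZ$, and the computation $q(t_v^{2})=q(\overline v\otimes\overline v)=q(\overline v)\neq 0$ shows $t_v$ has exact order $4$ in $G$, hence $G\cong\ZZ/4\ZZ$. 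This \emph{is} the construction of $\lambda$, and (i)--(iii) are read off from the commutative diagram. The cases $g<5$ are then handled not by separate low-rank computations but by embedding $(V,\psi,q)$ into a symplectic $4$-group of large rank and restricting $\lambda$; this sidesteps your exceptional case $\mathrm{O}^{+}(4,2)$ as well. The trade-off: the paper's argument is short and conceptual but imports a nontrivial fact about Schur multipliers of finite orthogonal groups, whereas your programme, if carried out, would be self-contained and uniform in $g$ at the cost of a genuine combinatorial computation with presentations.
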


\begin{proof}

The orthogonal group $\mathrm{O}(\overline{V},q)$, by definition, preserves the quadratic form $q$ and therefore it preserves  $\ker(q:\Gamma(2)\rightarrow \ZZ/2\ZZ)$ under the outer action given by \eqref{equation:exSeq}. The group $\ker(q)$ is thus normal in $\Gamma$, and the quotient is a central extension
\begin{equation}
\label{equation:centralExt1}
0\rightarrow \Gamma(2)/\ker(q) \rightarrow \Gamma/\ker(q) \stackrel{p}\rightarrow \mathrm{O}(\overline{V},q)\rightarrow 0.
\end{equation}
Indeed, $\Gamma(2)/\ker(q) = \{ \pm \id + \ker(q) \}$ since $q(-\id) = \det(\overline{\psi})$ and $\overline{\psi}$ is non-degenerate.

Next, consider $\Omega(\overline{V},q) := \ker(D_q)$, the `special orthogonal group' in characteristic 2. The central extension
$$
0\rightarrow \Gamma(2)/\ker(q) \rightarrow p^{-1}(\Omega(\overline{V},q)) \stackrel{p}\rightarrow \Omega(\overline{V},q)\rightarrow 0,
$$
deduced from \eqref{equation:centralExt1}, has a {\em unique} splitting $\sigma$ for all $g\geq 5$. In fact, for such $g$ the group $\Omega(\overline{V},q)$ is simple with trivial Schur multiplier (\cite{Wilson}, \S 3.8.2). We may thus form the quotient $ G:=\left(\Gamma/\ker(q)\right)/\sigma(\Omega(\overline{V},q))$, a central extension of $\Gamma(2)/\ker(q) \stackrel{q}\simeq \ZZ/2\ZZ$ by $\mathrm{O}(\overline{V},q)/\Omega(\overline{V},q) \stackrel{D_q}\simeq \ZZ/2\ZZ$. 

We now claim that there is an isomorphism of $G$ with $\ZZ/4\ZZ$. To find this isomorphism, let $t_v$ be any anisotropic transvection. Clearly $D_q(\overline{t}_v) = 1$ from \eqref{equation:DicksonFormula}, so $t_v \neq 0 \in G$. Moreover, $t^2_v\in \Gamma(2)$ so that
$$
\lambda(t_v^2) = q(t_v^2) = q(\bar{v}\otimes \bar{v}) = q(\bar{v}) \neq 0,$$ 
thus $t_v$ gives an element of exact order 4 in $G$. 

To summarize, for $g\geq 5$ we have a commutative diagram

$$
\begin{tikzcd}[column sep=0.1in,row sep=0.25in]
0 \arrow{r} & \Gamma(2) \arrow{r} \arrow{d} & \Gamma \arrow{r} \arrow{d} & \mathrm{O}(\overline{V},q) \arrow{r} \arrow{d}{\mathrm{id}}& 0 \\
0 \arrow{r} & \Gamma(2)/\ker(q)\arrow{r}\arrow{d}{\mathrm{id}} &\Gamma/\ker(q) \arrow{r} \arrow{d} & \mathrm{O}(\overline{V},q) \arrow{r} \arrow{d}& 0 \\
0 \arrow{r} & \Gamma(2)/\ker(q)\arrow{r}\arrow{d}{q} & G \arrow{r}\arrow{d}{\simeq\; t_v}  & \mathrm{O}(\overline{V},q)/\Omega(\overline{V},q) \arrow{r}\arrow{d}{D_q} & 0 \\
0 \arrow{r} & \ZZ/2\ZZ \arrow{r}{2\cdot}&\ZZ/4\ZZ \arrow{r}{\mod 2}  & \ZZ/2\ZZ \arrow{r} & 0. \\
\end{tikzcd}
$$ 
We define $\lambda:\Gamma\rightarrow \ZZ/4\ZZ$ to be the homomorphism given by composing the arrows in the middle vertical column. Properties (i) and (ii) are then clear from the definition. Property (iii), and uniqueness, follow by requiring one (all) anisotropic transvection $t_v$ to map to 1 under the isomorphism $G\simeq \ZZ/4\ZZ$. If $g< 5$, we may choose an embedding $(V,\psi,q)\hookrightarrow ( V\oplus V', \psi\oplus \psi',q\oplus q')$ into a symplectic 4-group $V\oplus V'$  with large enough $g$, in which case there is a canonical injection 
$$
\Aut(V,\psi,q) \hookrightarrow \Aut( V\oplus V', \psi\oplus \psi',q\oplus q'),
$$
and we may define $\lambda$ on $\Gamma = \Aut(V,\psi,q)$ by restriction. 
\end{proof}

By construction, $\lambda$ satisfies the following important compatibility:

\begin{proposition}
\label{proposition:directSums}
For any two symplectic 4-groups with theta characteristic $(V,\psi,q),(V',\psi',q')$, the diagram
$$
\begin{tikzcd}
\Gamma(V)\times\Gamma(V') \arrow[hook]{r} \arrow{d}{\lambda(V) + \lambda(V')}  & \Gamma(V\oplus V')\arrow{d}{\lambda(V\oplus V')} \\
\ZZ/4\ZZ \arrow{r}{=} & \ZZ/4\ZZ
\end{tikzcd}  
$$
is commutative. 
\end{proposition}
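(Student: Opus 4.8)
The plan is to deduce the proposition from the uniqueness clause of Theorem~\ref{theorem:thetaCharacter}, after reducing to a single summand. The inclusion $\Gamma(V)\times\Gamma(V')\hookrightarrow\Gamma(V\oplus V')$ is the map $(\gamma,\gamma')\mapsto\gamma\oplus\gamma'$, both vertical arrows in the diagram are group homomorphisms, and $\Gamma(V)\times\Gamma(V')$ is generated by $\Gamma(V)\times\{\id\}$ and $\{\id\}\times\Gamma(V')$, with $(\gamma,\id)\cdot(\id,\gamma')=(\gamma,\gamma')$ and $(\gamma\oplus\id)\circ(\id\oplus\gamma')=\gamma\oplus\gamma'$. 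Since $(\lambda(V)+\lambda(V'))(\gamma,\id)=\lambda(V)(\gamma)$, and symmetrically on the other factor, it will be enough to show that the composite
$$
\mu\colon\Gamma(V)\xrightarrow{\ \gamma\,\mapsto\,\gamma\oplus\id\ }\Gamma(V\oplus V')\xrightarrow{\ \lambda(V\oplus V')\ }\ZZ/4\ZZ
$$
equals $\lambda(V)$, and likewise for $\Gamma(V')$. Note that one cannot feed $\Gamma(V)\times\Gamma(V')$ into Theorem~\ref{theorem:thetaCharacter} directly, since it is not itself the theta group of a symplectic $4$-group with theta characteristic; passing to a single factor is exactly what makes the uniqueness statement available, and it then suffices to verify that the homomorphism $\mu$ satisfies properties (i)--(iii) for $(V,\psi,q)$.

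I expect properties (ii) and (iii) to be immediate. For (iii): if $v\in V$ has $q(\bar v)\neq 0$, then $t_v\oplus\id=t_{v\oplus 0}$ in $\Gamma(V\oplus V')$, because $(\psi\oplus\psi')(v\oplus 0,\,x\oplus x')=\psi(v,x)$; and since $(q\oplus q')(\overline{v\oplus 0})=q(\bar v)\neq 0$, the element $t_{v\oplus 0}$ is again an anisotropic transvection, so $\mu(t_v)=\lambda(V\oplus V')(t_{v\oplus 0})=1$. For (ii): property (ii) of $\lambda(V\oplus V')$ gives $\mu(\gamma)\equiv D_{q\oplus q'}(\bar\gamma\oplus\id)\bmod 2$, and the rank formula \eqref{equation:DicksonFormula} gives $D_{q\oplus q'}(\bar\gamma\oplus\id)=\mathrm{rk}\big((\id+\bar\gamma)\oplus 0\big)=\mathrm{rk}(\id+\bar\gamma)=D_q(\bar\gamma)\bmod 2$.

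The remaining task is property (i), where one has to track the canonical identifications of Section~\ref{section:thetaMultipliers}. Writing $\alpha=\id+2\beta\in\Gamma(V)(2)$, one has $\alpha\oplus\id=\id+2(\beta\oplus 0)\in\Gamma(V\oplus V')(2)$, and the claim to check is that under the canonical identifications $\Gamma(2)\simeq\mathfrak{sp}\simeq\Sigma^2$ for $V\oplus V'$ and for $V$ the element $\alpha\oplus\id$ corresponds to the image of the symmetric tensor attached to $\alpha$ under the inclusion $\Sigma^2(\overline{V})\hookrightarrow\Sigma^2(\overline{V\oplus V'})$, the point being that $\overline{\psi\oplus\psi'}$ restricts to $\overline{\psi}$ on $\overline{V}$. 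Granting this, the linear form on $\Sigma^2(\overline{V\oplus V'})$ induced by $q\oplus q'$ restricts on $\Sigma^2(\overline{V})$ to the one induced by $q$ — on a square $\bar w\otimes\bar w$ with $\bar w\in\overline{V}$ both take the value $q(\bar w)$, and on cross terms both are governed by $\overline{\psi}$ — so property (i) of $\lambda(V\oplus V')$ yields $\mu|_{\Gamma(V)(2)}=2\cdot q$. Uniqueness then gives $\mu=\lambda(V)$, and running the same argument with the roles of $V$ and $V'$ exchanged completes the proof. The step I expect to be the main obstacle — more bookkeeping than conceptual difficulty — is precisely this compatibility of the canonical isomorphism $\Gamma(2)\simeq\Sigma^2(\overline{V})$ with orthogonal direct sums; all the other verifications drop out immediately from the defining properties of $\lambda$ established in Theorem~\ref{theorem:thetaCharacter}.
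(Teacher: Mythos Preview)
Your proof is correct. The paper itself does not give a separate proof of this proposition; it simply prefaces the statement with ``By construction, $\lambda$ satisfies the following important compatibility,'' treating it as an immediate consequence of how $\lambda$ was built in Theorem~\ref{theorem:thetaCharacter} (in particular, for small $g$ the character $\lambda(V)$ is \emph{defined} as the restriction of $\lambda(V\oplus V')$ along exactly this kind of direct-sum embedding).

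Your approach via the uniqueness clause is really the same idea made explicit: rather than appealing to the construction, you verify directly that the restricted homomorphism $\mu=\lambda(V\oplus V')|_{\Gamma(V)}$ satisfies the three characterizing properties (i)--(iii), and then invoke uniqueness. This has the advantage of being self-contained and of working uniformly regardless of whether $g$, $g'$, or $g+g'$ is below or above the threshold $5$ used in the proof of the theorem; it also makes transparent why the definition of $\lambda$ for small $g$ is independent of the auxiliary embedding chosen. The ``bookkeeping'' you flag for property~(i) is indeed routine: since $\overline{\psi\oplus\psi'}$ is block-diagonal, the element $\beta\oplus 0\in\mathfrak{sp}(\overline{V\oplus V'})$ lands in the summand $\Sigma^2(\overline V)\subset\Sigma^2(\overline{V\oplus V'})$, where the linear form induced by $q\oplus q'$ restricts to the one induced by $q$.
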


\subsection{Case $g=1$, $q$ even}
\label{section:lambdaGenus1Even} By choosing a basis for $V$ we may assume that $V\simeq \ZZ/4\ZZ^{\oplus 2}$, $\psi$ is the standard symplectic form 
$$
\psi(x_1,y_1,x_2,y_2) = x_1y_2 - y_1x_2,
$$
and $q(x,y) = xy$. In this case $\mathrm{O}(\overline{V},q) \simeq \mathrm{O}^+(2,2)$, 
$$
\mathrm{O}^+(2,2) = \left\{ \smalltwobytwo{1}{0}{0}{1}, \smalltwobytwo{0}{1}{1}{0} \right\}\simeq \ZZ/2\ZZ,
$$
and $\Gamma(2)\simeq \ZZ/2\ZZ^{\oplus 3}$. The theta group $\Gamma$ is generated by $S=\smalltwobytwo{0}{-1}{1}{0}$ and $T^2=\smalltwobytwo{1}{2}{0}{1}$. Now $T^2\in \Gamma(2)$ and it corresponds to the symmetric 2-tensor $\bar{v_1}\otimes \bar{v_1}$, where $v_1 = (1,0)$. Thus $\lambda(T^2) = q(\bar{v_1}\otimes \bar{v_1}) = q(1,0)=0$. On the other hand $D_q(\overline{S}) = 1$, thus $\lambda(S) = \pm 1$. The sign can be fixed by choosing an anisotropic transvection. For example, the vector $(1,1) \in V$ reduces mod 2 to the unique anisotropic vector of $\overline{V}$, and the corresponding transvection $t_v$ is given by the matrix $\smalltwobytwo{0}{1}{-1}{2}$. Setting $\lambda(t_v) = 1$ then forces $\lambda(S) = -1$, since $S\,t_v = T^2$. 

\subsection{Case $g=1$, $q$ odd}
\label{section:lambdaGenus1Odd}
 Again let $V = \ZZ/4\ZZ^{\oplus 2}$, $\psi$ as above and $q$ now given by
$
q(x,y) = x^2 + y^2 + xy.
$
We have $\mathrm{O}(\overline{V},q) \simeq \mathrm{O}^-(2,2)\simeq S_3$ and $\Gamma(2)\simeq \ZZ/2\ZZ^{\oplus 3}$. The theta group $\Gamma$ is isomorphic to $\mathrm{SL}_2(\ZZ/4\ZZ)$ and is generated by $S$ and $T=\smalltwobytwo{1}{1}{0}{1}$. Now $T = t_{v_1}$ is the anisotropic transvection of the vector $v_1=(1,0)$, thus $\lambda(T)=1$. On the other hand $\lambda(S) = \pm 1$ as before. To fix the sign, let $v=(1,1)$ as before and apply $\lambda$ to $S\,t_v = T^2$ to obtain $\lambda(S) = 1$. 

\subsection{Case $g\geq 3$, $q$ even} By Proposition \ref{proposition:directSums}, in order to compute $\lambda$ explicitly it suffices to find a formula for it in the case of $q$ even and $g$ large (say $g \geq 3$). In this case, a different construction of $\lambda$ has already been given in \cite{JM}, together with a remarkably simple algorithm to compute it. We now recall this construction and show how it is a special case of ours. 

\begin{definition}
Let $(V,\psi,q)$ be a symplectic 4-group of rank $2g$ with even theta characteristic. A free $\ZZ/4\ZZ$-submodule $L\subseteq V$ is called an {\em isotropic lagrangian} if it is a direct summand of $V$ of rank $g$, with $\psi = 0$ on $L\times L$ $q(\overline{L}) = 0$.  
\end{definition}

Let $L$ be any isotropic lagrangian, and let $\{v_i\}_{i=1}^g$ be a $\ZZ/4\ZZ$-basis for it. Any two such bases differ by a uniquely defined element of $\mathrm{GL}(L)$, of determinant $\pm 1$. We may thus define an equivalence relation on the set of all bases for $L$ by declaring $\{v_i\}_{i=1}^g \sim \{v'_i\}_{i=1}^g$ if they differ by an element of determinant 1. There are only 2 such equivalence classes, which we call {\em orientations} of $L$.  

\begin{definition}
A pair $(L,[\{v_i\}_{i=1}^g])$ of an isotropic lagrangian $L$ and a choice of orientation $[\{v_i\}^g_{i=1}]$ is called an {\em oriented} isotropic lagrangian. The set of all such pairs is denoted by $\Lambda_0(V)$. 
\end{definition}

The authors of \cite{JM} define the function $m_{JM}: \Lambda_0(V)\times\Lambda_0(V) \rightarrow \ZZ/4\ZZ$ by
$$
(L_1,L_2) \longmapsto \sigma(L_1,L_2) + (g-\dim {\bar{L}_1\cap\bar{L}_2}) - 1 \mod 4,
$$
where $\sigma(L_1,L_2) \in \{\pm 1\}$ is a sign function, depending on the orientations, defined as follows: if $L_1\cap L_2 = \{0\}$ (i.e. $L_1$ and $L_2$ are {\em transversal}), then $\psi: L_1\rightarrow L^*_2$ is an isomorphism, and $\sigma$ is the determinant of the matrix of this isomorphism with respect to the orientations given. If $\overline{L}_1 = \overline{L}_2$, then we choose an isotropic lagrangian $D$ which is transversal to both $L_1$ and $L_2$ and set $\sigma(L_1,L_2) = \sigma(L_1,D)\sigma(L_2,D)$. All other cases can be reduced to these two (\cite{JM}, \S 2). To define a character $\Gamma\rightarrow \ZZ/4\ZZ$, fix an oriented isotropic lagrangian $L_0$ and let
$$
\lambda_{JM}(\gamma):= m_{JM}(L_0, \gamma L_0). 
$$
This is a homomorphism satisfying $\lambda_{JM}(t_v) =1$ (\cite{JM}, \S 3) for any anisotropic transvection $t_v$, thus $\lambda_{JM} = \lambda$ since the abelianization of $\Gamma$ is equal to $\ZZ/4\ZZ$, generated by the conjugacy classes of anisotropic transvections (\cite{JM}, Theorem 1.1.(i)). 

Conversely, the function $m_{JM}$ is entirely determined by the character $\lambda_{JM}$, since this is trivial on commutators. In particular, $m_{JM}$ can be recovered from our definition of $\lambda$. To see this, note that the theta group $\Gamma = \Aut(V,\psi,q)$ acts transitively on $\Lambda_0(V)$, and we may define a function
\begin{align*}
m: \Lambda_0(V)\times \Lambda_0(V) &\longrightarrow \ZZ/4\ZZ \\
(L_1,L_2) &\longmapsto \lambda(\gamma_{1,2}),
\end{align*}
by choosing $\gamma_{1,2} \in \Gamma$ so that $\gamma_{1,2}L_1 = L_2$.

\begin{lemma} 
The function $m$ is well-defined, and $m=m_{JM}$.  
\end{lemma}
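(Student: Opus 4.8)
The plan is to deduce both assertions at once from two ingredients: the $\Gamma$-invariance of $m_{JM}$ under the diagonal action on $\Lambda_0(V)\times\Lambda_0(V)$, and the already-established identity $\lambda=\lambda_{JM}$.

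First I would verify that $m_{JM}(\gamma L_1,\gamma L_2)=m_{JM}(L_1,L_2)$ for all $\gamma\in\Gamma$, where $\gamma$ acts on an oriented lagrangian by transporting a representative basis. In the transversal case this is immediate: $\gamma$ is $\ZZ/4\ZZ$-linear and preserves $\psi$, so it carries the isomorphism $\psi\colon L_1\xrightarrow{\sim}L_2^*$ to $\psi\colon \gamma L_1\xrightarrow{\sim}(\gamma L_2)^*$ and the chosen orientations of $L_1,L_2$ to those of $\gamma L_1,\gamma L_2$; hence the defining determinant, and so $\sigma$, is unchanged, while $\dim(\overline{\gamma L_1}\cap\overline{\gamma L_2})=\dim(\overline{L}_1\cap\overline{L}_2)$. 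The case $\overline{L}_1=\overline{L}_2$ follows by transporting a common transversal $D$ to $\gamma D$, and the remaining configurations reduce to these two exactly as in \cite{JM}, \S 2.

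Next I would run the reduction to the base point $L_0$. Given a pair $(L_1,L_2)$ and any $\gamma_{1,2}\in\Gamma$ with $\gamma_{1,2}L_1=L_2$, use transitivity of $\Gamma$ on $\Lambda_0(V)$ to choose $\delta$ with $\delta L_0=L_1$, so that $L_2=(\gamma_{1,2}\delta)L_0$. Then $\Gamma$-invariance gives
$$
m_{JM}(L_1,L_2)=m_{JM}\bigl(\delta L_0,(\gamma_{1,2}\delta)L_0\bigr)=m_{JM}\bigl(L_0,(\delta^{-1}\gamma_{1,2}\delta)L_0\bigr)=\lambda_{JM}(\delta^{-1}\gamma_{1,2}\delta),
$$
the last equality being the definition of $\lambda_{JM}$. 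Since $\lambda_{JM}=\lambda$ is a homomorphism into the abelian group $\ZZ/4\ZZ$, conjugation-invariance yields $\lambda(\delta^{-1}\gamma_{1,2}\delta)=\lambda(\gamma_{1,2})$. Thus $m_{JM}(L_1,L_2)=\lambda(\gamma_{1,2})$ for every admissible choice of $\gamma_{1,2}$; in particular $\lambda(\gamma_{1,2})$ is independent of that choice, which is precisely the well-definedness of $m$, and $m=m_{JM}$ then holds by the defining formula $m(L_1,L_2)=\lambda(\gamma_{1,2})$.

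The only real work is the first step: the $\Gamma$-invariance of $m_{JM}$ is conceptually obvious but must be checked with some care in the degenerate configurations where the definition of $\sigma$ branches, and this is where I expect to spend the most effort, although it is ultimately routine given the symmetry built into the construction of \cite{JM}. I should note that well-definedness can also be obtained in isolation: if $h\in\Gamma$ fixes the oriented lagrangian $L_0$, then $\lambda(h)=\lambda_{JM}(h)=m_{JM}(L_0,hL_0)=m_{JM}(L_0,L_0)$, and $m_{JM}(L_0,L_0)=\sigma(L_0,L_0)+(g-\dim\overline{L}_0)-1=1+g-g-1=0$ using $\sigma(L_0,L_0)=\sigma(L_0,D)^2=1$; conjugating an arbitrary stabilizer into $\mathrm{Stab}_\Gamma(L_0)$ via transitivity then finishes, again because $\ZZ/4\ZZ$ is abelian.
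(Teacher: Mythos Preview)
Your argument is correct, but it follows a different route from the paper's own proof. The paper establishes well-definedness directly, without appealing to $m_{JM}$ at all: it fixes an oriented isotropic lagrangian $L$, writes the stabilizer $\Gamma(L)$ in block upper-triangular form $\smalltwobytwo{A}{B}{0}{A^{-1,t}}$ with $A\in\mathrm{SL}(L)$, and then shows $\lambda$ vanishes on $\Gamma(L)$ via a filtration: $\mathrm{SL}(L)$ has no nontrivial characters, the unipotent radical $T$ meets $\Gamma(2)$ in elements mapping into $\Sigma^2(L)$ where $q=0$ since $L$ is isotropic, and on the quotient $\overline{T}$ one has $D_q(\bar\gamma)=0$ by the rank formula. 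The equality $m=m_{JM}$ is then dispatched in one line, both functions being determined by the common character $\lambda=\lambda_{JM}$.

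Your approach instead leans on the $\Gamma$-equivariance of $m_{JM}$ (checked from its definition) together with the identity $\lambda=\lambda_{JM}$ already recorded before the lemma, and extracts well-definedness and $m=m_{JM}$ simultaneously from the conjugation-invariance of a character into an abelian group. This is more economical and avoids the block-matrix analysis entirely; on the other hand, the paper's computation exhibits concretely \emph{why} $\lambda$ kills the stabilizer, using only the intrinsic properties (i) and (ii) of Theorem~\ref{theorem:thetaCharacter} and not the Johnson--Millson machinery. Your alternative paragraph (computing $m_{JM}(L_0,L_0)=0$ to kill the stabilizer) is a nice hybrid and is also valid.
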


\begin{proof}
Choose an oriented isotropic lagrangian $L$ and a splitting $V = L\oplus M$ compatible with $\psi$ and $q$. Let $\Gamma(L)$ be the stabilizer of $L$ under the action of $\Gamma$ on $\Lambda_0(V)$,  i.e. the subgroup of all $\gamma \in \Gamma$ which preserve $L$, along with its orientation. We need to show that $\lambda$ factors through $\Gamma(L)$. Now any element of $\Gamma(L)$, with respect to the chosen orientation, has the form $\smalltwobytwo{A}{B}{0}{A^{-1,t}}$, where $A\in \mathrm{SL}(L)$. In other words, $\Gamma(L)$ is a group extension
$$
0 \rightarrow   T \longrightarrow \Gamma(L) \stackrel{\gamma\mapsto A}\longrightarrow \mathrm{SL}(L) \rightarrow 0,
$$
where elements of $T$ are of the form $\smalltwobytwo{I}{B}{0}{I}$. Since $\mathrm{SL}(L)$ has no non-trivial characters, it suffices to show that $\lambda$ factors through $T$. Indeed, $T$ is a group extension
$$
0 \rightarrow   T(2) \longrightarrow T \stackrel{\mod 2}\longrightarrow \overline{T} \rightarrow 0,
$$
where elements of $T(2)\subseteq \Gamma(2)$ are of the form $\smalltwobytwo{I}{B}{0}{I} = I +2\smalltwobytwo{0}{B'}{0}{0}$. These elements map under $\bar{\psi}$ to symmetric 2-tensors in $\Sigma^2(L)$, where the value of $\lambda = q$ is always 0 since $L$ is isotropic. Thus $\lambda$ factors through $T(2)$. But $\lambda(\gamma) \equiv D_q(\bar{\gamma}) \mod 2$, and clearly the elements of $T$ have even rank mod 2, thus $\lambda$ factors through all of $T$. Now the value of $m$ is entirely determined by $\lambda = \lambda_{JM}$, and the same is true for $m_{JM}$.   
\end{proof}

\begin{remark}
For $q$ odd, note that $\Lambda_0(V) = \emptyset$ and the geometric construction of \cite{JM} cannot be applied directly.  
\end{remark}

\section{Theta multiplier bundles} 
\label{section:thetaMultiplierBundle}Let now $S$ be a scheme over $\ZZ[1/2,i]$. where $i$ denotes a choice of primitive 4th root of unity. Let $K\rightarrow S$ be a finite \'{e}tale commutative group scheme with geometric fibers isomorphic to
$
\mathbb{Z}/4\mathbb{Z}^{\oplus 2g},
$
together with a non-degenerate symplectic pairing
$$
e_K: K\times K \longrightarrow \mu_4,
$$
where $\mu_N$ is the finite flat $S$-group scheme of $N$-th roots of unity. We call $(K,e_K)$ a {\em symplectic 4-group scheme} of rank $2g$. The group scheme $\overline{K}: = K/2K$ is endowed with the alternating (thus symmetric) pairing $e_{\overline{K}}:= e^2_{K}$. Let 
$$
e^K_*: \overline{K}\longrightarrow \mu_2
$$
be a quadratic character for $e_{\overline{K}}$. 

\begin{definition}
The triple $(K,e_K,e_*^K)$ is called a {\em symplectic $4$-group scheme with theta characteristic}. The {\em theta group} of $(K,e_K,e_*^K)$ is the finite \'{e}tale  group scheme  $\underline{\Gamma}_S$ representing the functor
$$
\underline{\mathrm{Aut}}(K,e_K,e^K_*)(T\rightarrow S):= \mathrm{Aut}(K\times_S T, e_K, e^*_K).
$$
\end{definition}

The character $\lambda$ of Theorem \ref{theorem:thetaCharacter}, composed with $[i]: \underline{\ZZ/4\ZZ}_S \simeq \mu_4$,  gives by descent a group scheme homomorphism
\begin{equation}
\label{equation:schemeCharacter}
\lambda: \underline{\Gamma}_S \longrightarrow \mu_4,
\end{equation}
which is compatible under taking direct sums, as in Proposition \ref{proposition:directSums}.

\subsection{} Given a symplectic 4-group scheme with theta characteristic $(K,e_K,e^K_*)$, consider the constant triple $(\underline{\ZZ/4\ZZ}_S^{2g},e_4,e^{\pm}_*)$ of rank $2g$, equipped with the standard symplectic form $e_4$ and a theta characteristic $e^{\pm}_*$ of the same type (i.e. even or odd) as that of $e_*^K$. The functor on $S$-schemes given by  
$$
\{T\rightarrow S\} \mapsto \mathrm{Isom}\left((K\times_S T,e_K,e^K_*),(\underline{\ZZ/4\ZZ}_S^{2g},e_4,e^{\pm}_*)\right)
$$
is representable by a $\underline{\Gamma}_S$-torsor $\underline{\mathrm{Isom}}_S\left((K,e_K,e^K_*),(\underline{\ZZ/4\ZZ}_S^{2g},e_4,e^{\pm}_*)\right)$. Define
$$
\cM(K,e_K,e_*) := \lambda_*\,\underline{\mathrm{Isom}}\left((K,e_K,e^K_*),(\underline{\ZZ/4\ZZ}_S^{2g},e_4,e^{\pm}_*)\right),
$$
a $\mu_4$-torsor over $S$, whose formation is compatible under base-change.

\begin{definition}
\label{definition:thetaMultiplierBundle}
Given a symplectic 4-group scheme $(K\rightarrow S,e_K,e^{K}_*)$ with theta characteristic, the $\mu_4$-torsor $\cM(K,e_K,e^K_*)$ over $S$ is called the {\em theta multiplier bundle} associated to $(K,e_K,e_*)$. 
\end{definition}

By Proposition \ref{proposition:directSums}, the formation of theta multiplier bundles is compatible with direct sums. More precisely, given any two symplectic 4-group schemes $(K,e_K,e_*^K),(K',e_{K'},e_*^{K'})$ with theta characteristic over $S$, there is a $\mu_4$-torsor isomorphism
\begin{equation}
\label{equation:torsorDirectSums}
\cM(K\oplus K', e_K\oplus e_{K'}, e_*^K\oplus e_*^{K'}) \stackrel{\simeq}\rightarrow \cM(K,e_K,e^K_*)\otimes \cM(K',e_{K'},e_*^{K'}).
\end{equation}

\section{Determinant line bundles on abelian schemes}

\subsection{} Let now $\pi:A\rightarrow S$ be an abelian scheme of dimension $g$ with identity section $e:S\rightarrow A$. Let $\cL$ be an invertible $\cO_A$-module which has been normalized at the identity, i.e. we have chosen an $\cO_S$-module isomorphism
$
e^*\cL \simeq \cO_S. 
$
Any such invertible $\cO_A$-module defines a canonical morphism
$
\varphi_{\cL}: A \longrightarrow A^{t}
$
to the dual abelian scheme. Let
$$
K(\cL):= \ker \phi_{\cL},
$$
a commutative group scheme over $S$. When $\cL$ is relatively ample, $K(\cL)$ is finite flat over $S$ and it is canonically endowed with a non-degenerate symplectic pairing
$$
e_{\cL}: K(\cL)\times K(\cL) \longrightarrow \GG_m. 
$$
In this case, $\mathrm{rk}(K(\cL)) = d^2$, where $d$ is the degree of $\cL$, so that  $K(\cL)$ is \'{e}tale over $S[1/d]$ (\cite{Mumford:EqAbv1} \S 1, \cite{Mumford:EqAbv2} \S 6). 

Suppose next that $S$ is a scheme where $1/2\in \cO_S$ and suppose that the (relatively ample, normalized) invertible sheaf $\cL$ is {\em symmetric}. This means that there is a (unique) isomorphism of normalized invertible $\cO_A$-modules
$
\iota_{\cL}: [-1]^*\cL \stackrel{\simeq}\longrightarrow \cL,
$
where $[-1]:A \rightarrow A$ is the inversion morphism. In this situation the isomorphism $\iota_{\cL}$, restricted to the fixed locus $A[2]$ of $[-1]^*$, is multiplication by $\pm 1$ and thus it defines a function
$$
e^{\cL}_*: A[2] \longrightarrow \mu_2.
$$
This function is quadratic for the symmetric pairing $e_{\cL^2}$ (\cite{Mumford:EqAbv1},\S 2, Cor. 1). 

\begin{proposition}
\label{proposition:ThetaQuadraticStructure}
Let $\Theta$ be a normalized, relatively ample, symmetric invertible sheaf of degree $d=1$ over an abelian scheme $A\rightarrow S$ of dimension $g$. Then $(K(\Theta^4), e_{\Theta^4},e_*^{\Theta})$ is a symplectic 4-group of rank $2g$ with theta characteristic.
\end{proposition}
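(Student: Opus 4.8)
The strategy is to reduce the proposition to the two facts recalled just before it --- non-degeneracy and the alternating property of the commutator pairing $e_{\cL}$ of a relatively ample $\cL$, and Mumford's result (\cite{Mumford:EqAbv1}, \S 2, Cor.~1) that $e_*^{\Theta}$ is a quadratic form for $e_{\Theta^2}$ --- once $K(\Theta^4)$ has been identified explicitly. Since $\deg\Theta=1$, the polarization $\varphi_{\Theta}\colon A\to A^t$ is an isomorphism; as $\varphi_{\Theta^n}=n\,\varphi_{\Theta}=\varphi_{\Theta}\circ[n]_A$, it follows that $K(\Theta^n)=\ker[n]_A=A[n]$ as closed subgroup schemes of $A$. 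In particular $K(\Theta^4)=A[4]$ and $K(\Theta^2)=A[2]$. Because $1/2\in\cO_S$, the group scheme $A[4]$ is finite \'{e}tale over $S$ with geometric fibres $(\ZZ/4\ZZ)^{\oplus 2g}$, hence a free $\ZZ/4\ZZ$-module scheme of rank $2g$. Applying Mumford's theory of the theta group to the relatively ample sheaf $\Theta^4$, the commutator pairing $e_{\Theta^4}$ on $K(\Theta^4)=A[4]$ is non-degenerate, alternating (being a commutator pairing, $e_{\Theta^4}(x,x)=1$), and valued in $\mu_4$ since $A[4]$ is killed by $4$. Thus $(K(\Theta^4),e_{\Theta^4})$ is a symplectic $4$-group scheme of rank $2g$, which gives the underlying symplectic structure.

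It remains to exhibit $e_*^{\Theta}$ as a quadratic character of $\overline{K}:=K(\Theta^4)/2K(\Theta^4)$ for $e_{\overline{K}}:=e_{\Theta^4}^{\,2}$. Since $2K(\Theta^4)=2A[4]=A[2]$, multiplication by $2$ induces an isomorphism $[2]\colon\overline{K}=A[4]/A[2]\xrightarrow{\ \sim\ }A[2]$, through which $e_*^{\Theta}$ becomes the function $e_*^{K}(\bar u):=e_*^{\Theta}(2u)$ on $\overline{K}$; this is well defined because $2u$ depends only on $u\bmod A[2]$. To check quadraticity I would combine Mumford's identity $e_*^{\Theta}(x+y)\,e_*^{\Theta}(x)^{-1}e_*^{\Theta}(y)^{-1}=e_{\Theta^2}(x,y)$ on $A[2]$ with the compatibility of the commutator pairings under multiplication by $2$,
\[
e_{\Theta^4}(u,v)^2=e_{\Theta^2}(2u,2v)\qquad(u,v\in A[4]),
\]
(a standard compatibility, most cleanly seen after transporting both pairings through $\varphi_{\Theta}$ to the Weil pairings attached to the principal polarization). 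Then for $u,v\in A[4]$,
\[
e_*^{K}(\bar u+\bar v)\,e_*^{K}(\bar u)^{-1}e_*^{K}(\bar v)^{-1}
=e_*^{\Theta}(2u+2v)\,e_*^{\Theta}(2u)^{-1}e_*^{\Theta}(2v)^{-1}
=e_{\Theta^2}(2u,2v)=e_{\Theta^4}(u,v)^2=e_{\overline{K}}(\bar u,\bar v),
\]
so $e_*^{\Theta}$ is a quadratic character for $e_{\overline{K}}$, and $(K(\Theta^4),e_{\Theta^4},e_*^{\Theta})$ is a symplectic $4$-group scheme of rank $2g$ with theta characteristic.

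The only point requiring genuine care is the pairing compatibility $e_{\Theta^4}^{\,2}=e_{\Theta^2}\circ([2]\times[2])$ on $A[4]$, i.e.\ checking that the multiplication-by-$2$ isomorphism $\overline{K}\simeq A[2]$ intertwines $e_{\Theta^4}^{\,2}$ with $e_{\Theta^2}$ with the correct normalization; here one must be attentive to Mumford's sign conventions for commutator pairings and to the characteristic-$2$ distinction between ``alternating'' and ``symmetric'' pairings underlying $\overline{\psi}$ and $q$. An alternative that sidesteps this is to transport every pairing through $\varphi_{\Theta}$ to the standard tower of Weil pairings $A[n]\times A^t[n]\to\mu_n$, where $e_4(u,w)^2=e_2(2u,2w)$ is the familiar compatibility, and then unwind definitions; the well-definedness of $e_*^{K}$ and its quadraticity fall out immediately.
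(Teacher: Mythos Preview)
Your argument is correct and follows essentially the same route as the paper: identify $K(\Theta^n)$ with $A[n]$ via the additivity $\varphi_{\Theta^n}=n\varphi_{\Theta}$ and the fact that $\varphi_{\Theta}$ is an isomorphism (degree $1$), then invoke Mumford's result that $e_*^{\Theta}$ is quadratic for $e_{\Theta^2}$. The paper's proof is in fact terser than yours---it simply asserts that $e_*^{\Theta}$ is a theta characteristic once the identifications $K(\Theta^2)\simeq A[2]$, $K(\Theta^4)\simeq A[4]$ are made, without spelling out the passage from $\overline{K}=A[4]/A[2]$ to $A[2]$ via $[2]$ or the pairing compatibility $e_{\Theta^4}^{\,2}=e_{\Theta^2}\circ([2]\times[2])$; you have made explicit exactly the step the paper leaves to the reader, and your Weil-pairing verification of that compatibility is the right way to do it.
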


\begin{proof}
The invertible sheaf $\Theta^2$ (resp. $\Theta^4$) is also relatively ample and of degree $2^{g}$ (resp. $4^{g}$), so that $K(\Theta^2)$ (resp. $K(\Theta^4)$) is finite \'{e}tale of rank $2^{2g}$ (resp. $4^{2g}$). In fact, since $\varphi_{\cL}$ is additive in $\cL$, there is a canonical isomorphism 
$$
K(\Theta^2) = \ker 2\,\varphi_L \simeq A[2]
$$ 
and similarly $K(\Theta^4) = \ker 4\,\varphi_L \simeq A[4]$. In particular, the function $e_*^{\Theta}$ is a theta characteristic on the symplectic 4-group scheme $(K(\Theta^4), e_{\Theta^4})$. 
\end{proof}

Suppose additionally that $S$ is over $\mathbb{Z}[1/2,i]$. Given $\Theta$ as in Proposition \ref{proposition:ThetaQuadraticStructure}, we can form the corresponding theta multiplier bundle (Definition \ref{definition:thetaMultiplierBundle})
\begin{equation}
\label{equation:thetaMultiplierBundleDefinition}
\cM(\Theta):= \cM(K(\Theta^4), e_{\Theta^4},e_*^{\Theta}),
\end{equation}
which is a $\mu_4$-torsor over $S$. 

\begin{proposition}
\label{proposition:base-change}
The formation of $\cM(\Theta)$ is compatible under base-change; that is, given any two pairs $(A,\Theta)$ and $(A',\Theta')$ as in Proposition \ref{proposition:ThetaQuadraticStructure}, a base-change morphism of abelian schemes
$$
\begin{tikzcd}
A'\stackrel{\phi}\simeq  A\times_S S' \arrow{r} \arrow{d}  & A\arrow{d}\\
S' \arrow{r}{\varphi} & S,
\end{tikzcd}
$$
together with an $\cO_{S'}$-module isomorphism $\psi: \phi^*\Theta \simeq \Theta'$, then there is a canonical isomorphism
$
\varphi^*\cM(\Theta) \simeq \cM(\Theta')
$
as $\mu_4$-torsors. 
\end{proposition}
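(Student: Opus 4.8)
The plan is to show that each ingredient entering the definition of $\cM(\Theta)$ is formed in a way that is manifestly compatible with base change, and then to compose these compatibilities. Using $\psi$ to identify $\phi^*\Theta$ with $\Theta'$ (compatibly with the normalizations, which is automatic by uniqueness) one gets $\psi^{\otimes 4}\colon \phi^*(\Theta^4)\xrightarrow{\sim}\Theta'^4$. Since the polarization morphism satisfies $\varphi_{\phi^*\cL}=\varphi_{\cL}\times_S\mathrm{id}_{S'}$ for any normalized invertible sheaf $\cL$, and kernels commute with base change, this identifies $K(\Theta'^4)$ with $K(\Theta^4)\times_S S'$; the commutator pairing of Mumford's theta group being itself compatible with base change, $e_{\Theta'^4}$ is carried to the pullback of $e_{\Theta^4}$. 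Likewise the symmetry isomorphism $\iota_{\Theta}\colon[-1]^*\Theta\xrightarrow{\sim}\Theta$ is characterized by restricting to the identity on $e^*\Theta$, hence pulls back to $\iota_{\Theta'}$, so that $e_*^{\Theta'}$ is the restriction to $A'[2]\simeq A[2]\times_S S'$ of the pullback of $e_*^{\Theta}$. In sum, $(K(\Theta'^4),e_{\Theta'^4},e_*^{\Theta'})$ is canonically the base change along $\varphi$ of $(K(\Theta^4),e_{\Theta^4},e_*^{\Theta})$.

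Next I would draw the consequences for the theta group and the character $\lambda$. Taking automorphism group schemes of the two triples (both finite \'etale, since we are over $\ZZ[1/2]$ and $K(\Theta^4)$ has rank $4^{2g}$, as in Proposition \ref{proposition:ThetaQuadraticStructure}) yields a canonical isomorphism $\underline{\Gamma}_{S'}\simeq\varphi^*\underline{\Gamma}_S$. Under it, the homomorphism $\lambda$ of \eqref{equation:schemeCharacter} over $S'$ is the $\varphi$-pullback of the one over $S$: both are produced by \'etale descent from the single fixed group homomorphism $\lambda\colon\Gamma\to\ZZ/4\ZZ\simeq\mu_4$ of Theorem \ref{theorem:thetaCharacter}, and descent data pull back to descent data. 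I would also record that the parity of the theta characteristic is locally constant on $S$ and is preserved by base change, so the same constant model triple $(\underline{\ZZ/4\ZZ}^{2g},e_4,e_*^{\pm})$ of that parity serves over both $S$ and $S'$, its formation being tautologically compatible with $\varphi$.

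It then remains to transport the torsor compatibilities through the two functorial constructions defining $\cM(\Theta)$. Because the $\mathrm{Isom}$-sheaf commutes with base change, there is a canonical $\underline{\Gamma}_{S'}$-equivariant isomorphism between $\underline{\mathrm{Isom}}_{S'}$ of the base-changed triple against the model triple over $S'$ and the $\varphi$-pullback of $\underline{\mathrm{Isom}}_S$ of the original triple against the model triple over $S$. Finally, $\lambda_*$ is the contracted product $(-)\times^{\underline{\Gamma}_S}\mu_4$, and forming the contracted product of a torsor with a group along a fixed homomorphism commutes with base change; applying $\varphi^*$ to $\cM(\Theta)=\lambda_*\underline{\mathrm{Isom}}_S(\cdots)$ and chaining the previous isomorphisms produces the desired canonical identification $\varphi^*\cM(\Theta)\simeq\cM(\Theta')$ of $\mu_4$-torsors.

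None of these steps is deep; the only one deserving a moment's care is the verification that the \emph{descended} character $\lambda$ over $S'$ coincides with the pullback of the one over $S$, which I expect to be the main (mild) obstacle. It is handled by the general principle that descent along an \'etale cover commutes with base change, applied to the finite-group homomorphism of Theorem \ref{theorem:thetaCharacter}; one must also keep track of the parity of $e_*^{\Theta}$ so that a single model triple is legitimate on both sides.
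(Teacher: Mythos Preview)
Your proposal is correct and follows the same approach as the paper: the paper's proof is the single sentence ``This is clear, since the formation of the symplectic 4-group $(K(\Theta^4), e_{\Theta^4})$ and the theta characteristic $e_*^{\Theta}$ are all compatible under such base-change,'' and you have simply unpacked this into its constituent verifications (base-change compatibility of $K$, of the pairing, of $e_*$, of the automorphism scheme, of $\lambda$, of the $\mathrm{Isom}$-torsor, and of $\lambda_*$). Nothing you wrote diverges from the paper's reasoning; it is just a more explicit rendering of the same argument.
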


\begin{proof}
This is clear, since the formation of the symplectic 4-group $(K(\Theta^4), e_{\Theta^4})$ and the theta characteristic $e_*^{\Theta}$ are all compatible under such base-change. 
\end{proof}

In addition, the formation of $\mathcal{M}(\Theta)$ is compatible under taking direct products, i.e. for any two pairs $(A_1,\Theta_1),(A_2,\Theta_2)$ there is a canonical $\mu_4$-torsor isomorphism 
\begin{equation}
\label{equation:externalDirectSums}
\mathcal{M}(\Theta_1\boxtimes\Theta_2) \simeq \mathcal{M}(\Theta_1)\otimes\mathcal{M}(\Theta_2),
\end{equation}
as follows from \eqref{equation:torsorDirectSums}.

\subsection{} There is another $\mu_4$-torsor that can be canonically attached to the pair $(A,\Theta)$ of Proposition \ref{proposition:ThetaQuadraticStructure}. Namely, let
\begin{equation}
\label{equation:determinantDefinition}
\Delta(\Theta):= \pi_*\Theta^{\otimes 2}\otimes \underline{\omega}_{A/S}
\end{equation}
be the {\em determinant bundle}, where $\underline{\omega}_{A/S}:= \det(\pi_*\Omega^1_{A/S})$ is the Hodge bundle of $\pi:A\rightarrow S$. Since $\Theta$ is relatively ample of degree 1, the determinant bundle is an invertible sheaf over $S$ (\cite{MoretBailly:PinceauxAbv}, VIII.1.0), and its formation is compatible with base-change (\cite{MoretBailly:PinceauxAbv}, VIII.1.1.1). Moreover, there is an $\cO_S$-module isomorphism (\cite{FaltingsChai}, Theorem I.5.1) 
\begin{equation}
\label{equation:determinantBundleIs4Torsion}
\Delta(\Theta)^{\otimes 4} \simeq \cO_S
\end{equation}
which is compatible under base-change (\cite{Polishchuk-Determinants}, Remark after Theorem 0.2) giving $\Delta(\Theta)$ a canonical $\mu_4$-torsor structure over $S$. 

\begin{remark}
The right level of generality in which to study determinant bundles is that of a non-degenerate symmetric line bundle $\mathcal{L}$ over $A$ of arbitrary degree $d\geq 0$, in which case $\Delta(\mathcal{L}) := (\det R\pi_*\Theta)^{\otimes 2}\otimes \underline{\omega}_{A/S}$ is a `true' determinant (\cite{MoretBailly:PinceauxAbv}, \cite{FaltingsChai}, \cite{Polishchuk-Determinants}). 
\end{remark}

By the K\"{u}nneth formula, determinant bundles are also compatible under direct products, i.e. for any two pairs $(A_1,\Theta_1),(A_2,\Theta_2)$ there is a canonical $\mu_4$-torsor isomorphism 
\begin{equation}
\label{equation:determinantCompatibility}
\Delta(\Theta_1\boxtimes\Theta_2) \simeq \Delta(\Theta_1)\otimes\Delta(\Theta_2).
\end{equation}
We now `compute' determinants in the genus 1 case, following \cite{Polishchuk-Determinants}, \S 5.1.

\subsection{Case $g=1$, $e_*$ even}
\label{section:determinantsGenus1Even} Let $\pi: E\rightarrow S$ be an elliptic curve over a scheme $S$ with $1/2\in \cO_S$, and let $e:S\rightarrow E$ be its identity section. Let $P:S\rightarrow E$ be a non-trivial section of order 2. Then we claim that 
$
\Theta := \cO_E(P)
$
is a normalized, relatively ample, symmetric invertible sheaf of degree 1 over $E$ with even $e_*^{\Theta}$. Indeed $e^*\Theta\simeq\cO_S$ (since $P\neq e$), $\cO_E(P)$ is ample of degree 1 on geometric fibers and $\cO_E(P)\simeq \cO_E([-P])$ since $P$ is 2-torsion. The theta characteristic is even, since $e_*^{\Theta}$  is given by the formula (\cite{Mumford:EqAbv1}, \S2, Proposition 2)
$$
e_*^{\Theta}(x) = (-1)^{\mathrm{mult}_x(P) - \mathrm{mult}_e(P)}
$$
where $x\in E[2]$. We claim that there is an isomorphism
\begin{equation}
\label{equation:determinantGenus1Even}
\Delta(\Theta) \simeq \underline{\omega}_{E/S},
\end{equation}
induced by the isomorphism $\pi_*\cO_E(P) \simeq \cO_S$. The latter can be deduced from the adjunction exact sequence 
$$
0 \longrightarrow \cO_E \longrightarrow \cO_E(P) \longrightarrow P_*P^*\cO_E(P)\longrightarrow 0.
$$
Indeed, taking right-derived functors of $\pi_*$ we get a long exact sequence
$$
0\rightarrow \pi_*\cO_E \rightarrow \pi_*\cO_E(P) \rightarrow P^*\cO_E(P) \rightarrow R^1\pi_*\cO_E \rightarrow R^1\pi_*\cO_E(P) \rightarrow \ldots
$$
But $R^1\pi_*\cO_E(P) = 0$, as can be checked on the geometric fibers, and all the other sheaves in the exact sequence are locally free of rank 1, thus $\pi_*\cO_E(P) \simeq \pi_*\cO_E \simeq \cO_S$.

\subsection{Case $g=1$, $e_*$ odd}
\label{section:determinantsGenus1Odd}  In this case, let
$$
\Theta:= \cO_E(e)\otimes\Omega^1_{E/S}.
$$
Note that $e^*\cO_E(e) \simeq R^1\pi_*\cO_E \simeq \underline{\omega}_{E/S}^{-1}$, as can be deduced as above by taking right-derived functors of $\pi_*$ applied to the adjunction exact sequence
$$
0 \longrightarrow \cO_E \longrightarrow \cO_E(e) \longrightarrow e_*e^*\cO_E(e)\longrightarrow 0.
$$
Therefore $\Theta$ is normalized. It is also symmetric, relatively ample of degree 1 as before. The characteristic is now odd, and there is an isomorphism 
\begin{equation}
\label{equation:determinantGenus1Odd}
\Delta(\Theta) \simeq \underline{\omega}^{\otimes 3}_{E/S},
\end{equation}
as follows by noting that $\pi_*\cO_E(e)\simeq\cO_E$, as before, and by the projection formula applied to $\Omega^1_{E/S} = \pi^*\underline{\omega}_{E/S}$.

\section{The canonical key formula}
\label{section:canonicalKeyFormula}

Let $(A,\Theta)$ be a pair of an abelian scheme $A\rightarrow S$ of dimension $g$, where $S$ is a scheme over $R=\ZZ[1/2,i]$, together with a normalized, relatively ample, symmetric invertible sheaf $\Theta$ of degree $d=1$.  We have two canonical $\mu_4$-torsors over $S$ attached to this pair:
\begin{itemize}
\item[(i)]the {\em theta multiplier bundle}
$$
\cM(\Theta):= \cM(K(\Theta^4), e_{\Theta^4},e_*^{\Theta})
$$
of \eqref{equation:thetaMultiplierBundleDefinition}, and  
\item[(ii)] the {\em determinant bundle} 
$$
\Delta(\Theta):= \pi_*\Theta^{\otimes 2}\otimes \underline{\omega}_{A/S}
$$
of \eqref{equation:determinantDefinition}.
\end{itemize}

This section is devoted to showing the following `canonical key formula' (cp. the weaker $\mathrm{FCC}^{\mathrm{ab}}(\Spec(R),g,1)$ of \cite{MoretBailly:PinceauxAbv}, VIII.1.2), comparing the image of $\cM(\Theta)$ and $\Delta(\Theta)$ under the natural map
$$
\{\text{iso. classes of } \mu_4\text{-torsors over } S\}= H^1_{\mathrm{\acute{e}t}}(S,\mu_4)\longrightarrow H^1_{\mathrm{\acute{e}t}}(S,\GG_m)[4] = \Pic(S)[4].
$$

\begin{theorem}[Canonical Key Formula]
\label{theorem:FCC}
For each pair $(A\rightarrow S,\Theta)$ as above, there is a functorial (i.e. compatible under base-change) $\cO_S$-module isomorphism
$$
\Delta(\Theta)\simeq \cM(\Theta)^{-1}
$$
between the underlying invertible sheaves. 
\end{theorem}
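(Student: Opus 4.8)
The plan is to reduce to the universal family over the moduli stack, translate the statement into an equality of Picard classes, cut down to genus one using the multiplicative behaviour of both bundles, and there match them by hand using the computations of the preceding sections. Concretely, the formation of $\Delta(\Theta)$ is compatible with base change (\cite{MoretBailly:PinceauxAbv}, VIII.1.1.1) and so is that of $\cM(\Theta)$ (Proposition \ref{proposition:base-change}), and every pair $(A\to S,\Theta)$ as in the statement is pulled back from the universal pair $(\mathscr{A}\to\mathscr{A}_g^{\pm},\Theta^{\mathrm{univ}})$. Hence it suffices to exhibit one isomorphism over $\mathscr{A}_g^{\pm}$, i.e. to prove the equality of classes $[\Delta(\Theta^{\mathrm{univ}})]=[\cM(\Theta^{\mathrm{univ}})^{-1}]$ in $\Pic(\mathscr{A}_g^{\pm})$; the resulting isomorphism is then unique up to $H^0(\mathscr{A}_g^{\pm},\GG_m)=R^{\times}$, and fixing the constant on a single fibre makes it functorial. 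Both classes are $4$-torsion: $\Delta(\Theta)$ by \eqref{equation:determinantBundleIs4Torsion} and $\cM(\Theta)$ because it is a $\mu_4$-torsor, so the comparison takes place in $\Pic(\mathscr{A}_g^{\pm})[4]$.

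Next I would reduce to $g=1$. Both constructions send external products to tensor products: $\cM$ by \eqref{equation:externalDirectSums} and $\Delta$ by \eqref{equation:determinantCompatibility}. Since the Arf invariant is additive under orthogonal direct sums, a product of elliptic curves carrying theta characteristics of prescribed parities defines a morphism $j\colon\prod_{i=1}^{g}\mathscr{A}_1^{\epsilon_i}\to\mathscr{A}_g^{\pm}$ whose image meets the even component (all $\epsilon_i$ even) and the odd one (exactly one $\epsilon_i$ odd), and $j^{*}\Delta(\Theta^{\mathrm{univ}})\simeq\bigotimes_i\Delta(\Theta_i)$, $j^{*}\cM(\Theta^{\mathrm{univ}})\simeq\bigotimes_i\cM(\Theta_i)$. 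Thus, provided the restriction $\Pic(\mathscr{A}_g^{\pm})\to\Pic\big(\prod_i\mathscr{A}_1^{\epsilon_i}\big)$ is injective on $4$-torsion, the theorem follows from its genus-one instance. Proving this injectivity is the step that genuinely ``computes with Picard groups over the moduli stack'': one starts from $\Pic(\mathscr{A}_g)_{\QQ}=\QQ\cdot\underline{\omega}$ (Borel), analyses the torsion of $\Pic(\mathscr{A}_g^{\pm})$ over $R$ through the Hochschild--Serre spectral sequence for a level cover with finite Galois group (the $\mathrm{O}(\overline{V},q)$-cover $\mathscr{A}_g(2)$, or the theta-group cover $\mathscr{A}_g(4)$ trivializing $\underline{\mathrm{Isom}}$) together with the Kummer sequence, and checks that nothing in the $4$-torsion becomes invisible on the locus of products of elliptic curves.

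In genus one, \S\ref{section:determinantsGenus1Even} and \S\ref{section:determinantsGenus1Odd} give $\Delta(\Theta)\simeq\underline{\omega}_{E/S}$ in the even case and $\Delta(\Theta)\simeq\underline{\omega}_{E/S}^{\otimes 3}$ in the odd case, so it remains to show $\cM(\Theta)\simeq\underline{\omega}_{E/S}^{-1}$, resp. $\underline{\omega}_{E/S}^{-3}$. Over $\mathscr{A}_1^{\pm}$ the $\underline{\Gamma}_S$-torsor $\underline{\mathrm{Isom}}\big(K(\Theta^4),\dots\big)$ is the symplectic level-$4$ cover, with Galois group the theta group $\Gamma$ computed in \S\ref{section:lambdaGenus1Even}--\S\ref{section:lambdaGenus1Odd} (a quotient of $\mathrm{SL}_2(\ZZ/4\ZZ)$ in the even case, $\mathrm{SL}_2(\ZZ/4\ZZ)$ itself in the odd case), and $\cM(\Theta)$ is its pushout along $\lambda$, for which $\lambda(T^2)=0$, $\lambda(S)=-1$ (even) and $\lambda(T)=1$, $\lambda(S)=1$ (odd). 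Since $\Pic(\mathscr{A}_1^{\pm})$ over $R$ is a small, explicitly computable cyclic group in which $\underline{\omega}_{E/S}$ is a distinguished element, one identifies the character of $\Gamma$ cut out by $\underline{\omega}_{E/S}^{-1}$ (resp. $\underline{\omega}_{E/S}^{-3}$) and compares it with $\lambda$ on the generators $S$ and $T$; because $\Gamma^{\mathrm{ab}}\simeq\ZZ/4\ZZ$ is generated by the class of an anisotropic transvection (\cite{JM}, Theorem 1.1), this is a finite verification, and the two characters agree.

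The genus-one matching and the multiplicativity are essentially mechanical; the main obstacle is the Picard-group input of the second step — showing that the difference $[\Delta(\Theta)]-[\cM(\Theta)^{-1}]\in\Pic(\mathscr{A}_g^{\pm})$, which we know dies on every product of elliptic curves, is actually zero. This requires controlling the $4$-torsion of $\Pic(\mathscr{A}_g^{\pm})$ over $R=\ZZ[1/2,i]$, which receives contributions both from $\Spec R$ (via $R^{\times}/(R^{\times})^4$) and from the level and stacky structure, and verifying that all of it is detected on the elliptic locus. A cleaner alternative that bypasses the general Picard computation is an induction on $g$ through a toroidal compactification: both $\Delta$ and $\cM$ extend over $\overline{\mathscr{A}_g^{\pm}}$ and restrict compatibly along the boundary stratum indexed by $\mathscr{A}_{g-1}^{\pm}$ — the same mechanism that yields the weaker $\Delta(\Theta)^{\otimes 4}\simeq\cO_S$ of \cite{FaltingsChai} — with the induction anchored at the genus-one case above.
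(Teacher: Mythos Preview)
Your overall architecture matches the paper's: pass to the universal family, work in $\Pic(\sA_g^{\pm})[4]$, exploit multiplicativity under products (\eqref{equation:externalDirectSums}, \eqref{equation:determinantCompatibility}), and anchor everything in genus-one computations. The divergence is precisely at the step you flag as ``the main obstacle'': how to control $\Pic(\sA_g^{\pm})[4]$.

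You propose to prove that restriction along $j:\prod_i\sA_1^{\epsilon_i}\to\sA_g^{\pm}$ is injective on $4$-torsion, via Hochschild--Serre for a level cover or via boundary induction on a toroidal compactification. Neither is carried out, and both would be substantial undertakings over $R=\ZZ[1/2,i]$ (the first requires handling $H^1(\Gamma,\cO^{\times})$ for the theta group $\Gamma$ acting on the level stack; the second requires extending $\cM$ across the boundary and checking compatibility, which is not set up in the paper). So as written this is a genuine gap rather than a completed alternative argument.

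The paper avoids this entirely by a trick of Mumford (\cite{Mumford:Picard}). Rather than proving injectivity of a restriction map, it quotes the \emph{size} of the target: $\Pic(\sA_1^-)[4]\simeq\ZZ/4\ZZ$ (Fulton--Olsson, since $\sA_1^-=\sM_{1,1}[1/2,i]$) and $\Pic(\sA_g^+)[4]\simeq\ZZ/4\ZZ$ for $g\geq 3$ (Polishchuk, \cite{Polishchuk-Determinants}, Thm.~5.6). It then manufactures an explicit \emph{surjection} $\alpha:\Pic(\sA_g^{\pm})\to\ZZ/4\ZZ$ by evaluating any line bundle at the single geometric point given by the product of copies of the CM elliptic curve $E:y^2=x^3-x$ and reading off the action of the order-$4$ automorphism $[i]^{\times g}$ on the fibre. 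Surjectivity is checked on $\Delta_g$ itself (using \eqref{equation:determinantGenus1Even}, \eqref{equation:determinantGenus1Odd} one gets $\alpha(\Delta_g)=i^{\pm g}$), so $\alpha$ is an isomorphism on $4$-torsion, and one finishes by computing $\alpha(\cM_g^{-1})=\lambda^{-1}\!\bigl(\begin{smallmatrix}0&-1\\1&0\end{smallmatrix}\bigr)^{g}$ from \S\ref{section:lambdaGenus1Even}--\ref{section:lambdaGenus1Odd}.

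In short: your reduction to the elliptic locus is morally the same as the paper's, but you try to prove an injectivity statement about $j^*$, whereas the paper inputs the order of $\Pic[4]$ from the literature and builds a matching surjection via a single automorphism of a test object. The latter replaces your unfinished spectral-sequence/degeneration argument by a two-line citation plus an explicit fibre calculation. If you want to complete your route, the cleanest fix is simply to import Polishchuk's $\Pic(\sA_g^+)[4]\simeq\ZZ/4\ZZ$ and observe that your $j^*$ is then automatically injective (since you already know it hits a generator); but at that point you have essentially reproduced the paper's proof.
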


\begin{proof}

Consider the algebraic stack $\widetilde{\sA}_g$ over $R$ classifying all pairs $(A,\Theta)$ as above (morphisms as in Proposition \ref{proposition:base-change}), and let $\Delta_g, \cM_g$ be the $\mu_4$-torsors defined over $\widetilde{\sA}_{g}$ by the functors
$
(A,\Theta) \mapsto \Delta(\Theta) $ and 
$(A,\Theta) \mapsto \mathcal{M}(\Theta)$, respectively. To prove Theorem \ref{theorem:FCC}, it suffices to show that there exists an isomorphism $\Delta_g\simeq\cM_g^{-1}$ as invertible sheaves over $\widetilde{\sA}_{g} $. Now the stack $\widetilde{\sA}_{g} $  is smooth (\cite{MoretBailly:PinceauxAbv}, VIII.3.2.4) with two irreducible components $\sA^{\pm }_{g}$, each classifying pairs $(A,\Theta)$ with $e_*^{\Theta}$ even (+) or odd (-) (\cite{MB:FEqn}, Prop. 1.1.4). For each $g_1,g_2 \geq 1$, there is a map
\begin{align*}
m_{g_1,g_2}: \widetilde{\sA}_{g_1}\times \widetilde{\sA}_{g_2} &\longrightarrow \widetilde{\sA}_{g_1+g_2} \\
((A_1,\Theta_1), (A_2,\Theta_2)) &\longmapsto (A_1\times A_2, \Theta_1\boxtimes\Theta_2)
\end{align*}
and $\mu_4$-torsor isomorphisms (\eqref{equation:determinantCompatibility} and \eqref{equation:externalDirectSums})
\begin{align*}
m_{g_1,g_2}^*\Delta_{g_1+g_2} &\simeq \Delta_{g_1}\otimes\Delta_{g_2} \\
m_{g_1,g_2}^*\cM_{g_1+g_2} &\simeq \cM_{g_1}\otimes\cM_{g_2}.
\end{align*}
Moreover, the map $m_{g_1,g_2}$ sends $\sA^{-}_{g_1}\times \sA^{- }_{g_2}$ to $\sA^{+}_{g_1+g_2}$,  $\sA^{+}_{g_1}\times \sA^{+ }_{g_2}$ to $\sA^{+}_{g_1+g_2}$ and so on (\cite{MB:FEqn},\S 1.3). It suffices then to prove $\Delta_g\simeq\cM_g^{-1}$ over $\sA^{-}_1$ and over $\sA^{+}_g$, $g\geq 3$. 

First, consider $\sA^{-}_1 $. Following a technique of Mumford (\cite{Mumford:Picard}, \S6), we are going to construct an explicit isomorphism $$\alpha_1^{-}:\Pic(\sA^{-}_1 )[4] \stackrel{\simeq}\longrightarrow \ZZ/4\ZZ,$$ and then show that the images of  $\Delta_1, \cM_1^{-1}$ under $\alpha_1^{-}$ are equal. To do so, let $k$ be an algebraically closed field of characteristic not 2, and consider the elliptic curve $E/k$ given by the Weierstrass equation 
$
y^2 = x^3 - x.
$
Let $\Theta^{-}_1 := \cO_{E}(e)\otimes\Omega^1_{E/k}$. The pair $(E,\Theta^{-}_1)$ is classified by a point (Section \ref{section:determinantsGenus1Odd})
$$
\kappa_1^-: \Spec(k) \longrightarrow \sA^{-}_1 .
$$
The curve $E$ has a special automorphism of order 4 defined by
\begin{equation}
\label{equation:zeta4Auto}
[i](x) = -x, \quad [i](y) = i\,y,
\end{equation}
where $i \in k^{\times}$ is a choice of primitive 4-th root of unity. Under a suitable choice of basis for $E[4] \simeq (\ZZ/4\ZZ)^2$, we may assume that the action of $[i]$ is given by the matrix $\smalltwobytwo{0}{-1}{1}{0}$ and that $e_*^{\Theta_1^-}(a,b) = (-1)^{a+b+ab}$. Thus $[i]$ extends to an automorphism of the pair $(E,\Theta^-_1)$, since it preserves $e_*$. Now given an invertible sheaf $\cL$ on $\sA^{-}_1 $, we may uniquely define a 4-th root of unity $\alpha^-_1(\cL)$ by 
$$
\alpha^-_1(\cL):= [i]^* \in \Aut(\kappa_1^{-\,*}{\cL}) \simeq k^{\times},
$$
which gives a homomorphism
$
\alpha^-_1: \Pic(\sA^{-}_1 )\longrightarrow \ZZ/4\ZZ.
$
If we let $\cL = \Delta_1$, then $\kappa_1^{-\,*}\Delta_1 \simeq \underline{\omega}_{E/k}^{\otimes 3}$ by \eqref{equation:determinantGenus1Odd}. Now $\underline{\omega}_{E/k}$ is the 1-dimensional $k$-vector space generated by the regular differential $\frac{dx}{y}$ and thus
$$
\alpha^-_1(\Delta_1) = i^3 = i^{-1},
$$
as follows from the explicit formulas \eqref{equation:zeta4Auto}  defining $[i]$. Since $i$ is primitive, $\alpha^-_1$ is surjective. Moreover, since there is only one odd, normalized, symmetric, relatively ample invertible sheaf of degree 1 on an elliptic curve, we have $\sA^{-}_1  = \sM_{1,1}[1/2,i]$, where $\sM_{1,1}$ is the moduli stack of elliptic curves, for which we have (e.g. \cite{FultonOlsson})
$$
\Pic(\sM_{1,1}[1/2,i])[4] \simeq \ZZ/4\ZZ.
$$
Therefore $\alpha^-_1$ is an isomorphism when restricted to $\Pic(\sA^{-}_1 )[4]$. To show that $\Delta_1\simeq \cM_1^{-1}$, note that 
$$
\alpha^-_1(\cM_1^{-1}) = \lambda^{-1}\smalltwobytwo{0}{-1}{1}{0} = i^{-1},
$$
as follows from the computations of Section \ref{section:lambdaGenus1Odd}.

Next, we employ the same technique for the case $g\geq 3$ even. In particular, we are going to construct an explicit isomorphism   
$$
\alpha^+_g: \Pic(\sA^{+}_g)[4]\stackrel{\simeq}\longrightarrow \ZZ/4\ZZ
$$
for all $g\geq 3$, and then prove that $\alpha^+_g(\Delta_g) = \alpha^+_g(\cM_g^{-1})$. To do so, consider again the elliptic curve $E/k$. Let $\Theta^+_1 := \cO_{E}(P)$, where $P$ is the unique non-trivial point of order 2 fixed by $\Aut(E/k)$. The pair $(E,\Theta^+_1)$ is classified by a point (Section \ref{section:determinantsGenus1Even})
$
\kappa^+_1: \Spec(k) \rightarrow \sA^{+}_1.
$
If $g$ is odd, consider the point
$$
\kappa^{+}_g: \Spec(k) \longrightarrow \sA^{+}_g
$$
classifying $(E^{\times g},(\Theta^+_1)^{\boxtimes g})$. The special automorphism $[i]$ preserves the characteristic and it extends to an automorphism $[i]^{\times g}$ of the pair $(E^{\times g},(\Theta^+_1)^{\boxtimes g})$. Given an invertible sheaf $\cL$ on $\sA^{+}_g$, the action of $[i]^{\times g}$ on $\kappa^{+\,*}_g(\cL)$ gives a homomorphism  $\alpha^+_g: \Pic(\sA^{+}_g)\longrightarrow \ZZ/4\ZZ$. This is surjective, since \eqref{equation:determinantGenus1Even} shows that
$$
\kappa^{+\,*}_g(\Delta_g) \simeq \underline{\omega}_{E/k}^{g} = k\left(\frac{dx}{y}\right)^{\otimes g}
$$
and therefore $\alpha^+_g(\Delta_g) = i^g$, a primitive 4-th root of unity. If $g$ is even, apply the same argument with the automorphism $[i]^{\times g - 1}\times\id$ replacing $[i]^{\times g}$. In both cases, there is an isomorphism (\cite{Polishchuk-Determinants}, Theorem 5.6)
$$
\Pic(\sA^{+}_g)[4] \simeq \ZZ/4\ZZ
$$ 
thus $\alpha^+_g$ restricts to the desired isomorphism on 4-torsion.

To prove that $\Delta_g \simeq \cM_g^{-1}$, note that for $g$ odd
$$
\alpha_g^+(\cM_g^{-1}) = \lambda^{-g}\smalltwobytwo{0}{-1}{1}{0} = i^g,
$$ 
as follows from the computations of Section \ref{section:determinantsGenus1Even} and by the compatibility of $\lambda$ under direct sums. Similarly, for $g$ even we have
$$
\alpha_g^+(\cM_g^{-1}) = \lambda^{-g+1}\smalltwobytwo{0}{-1}{1}{0} = i^{g-1},
$$
which shows that $\Delta_g \simeq \cM_g^{-1}$ in all cases. 
\end{proof}

\begin{remark}
\label{remark:theta-function-free}
Theorem \ref{theorem:FCC} can also be proved using the classical transformation laws of theta functions. The advantage of using the `theta-function-free' approach above is that we can now use Theorem \ref{theorem:FCC} to give a new proof of the transformation laws of theta functions (this is done in Section \ref{section:classicalFormulas} below).
\end{remark}

\begin{remark}
Any two choices of isomorphism $\Delta_g \simeq \cM_g^{-1}$ must differ by an element of $\Gamma(\sA^{\pm }_g,\GG_m)$. The same arguments as in \cite{MB:FEqn}, \S 1 show that $\Gamma(\sA^{\pm}_g,\GG_m) = R^{\times}$, thus the isomorphism of Theorem \ref{theorem:FCC} is unique up to multiplication by a constant in $R^{\times}$. By going through the above arguments more carefully, it seems possible to reduce the ambiguity to $\mathbb{Z}^{\times} = \{\pm 1\}$. 
\end{remark}

\section{Algebraic and analytic functional equations}
\label{section:functionalEquation}

We now explain how the canonical key formula (Theorem \ref{theorem:FCC}, or rather its refinement \eqref{equation:keyFormulaSquareRoot} below) can be viewed as the algebraic analog of the functional equation of Riemann's theta function. For simplicity, we only treat the case of even theta characteristic, the odd case differing only in the explicit analytic formulas for the theta function. Thus let $(\pi:\sA\rightarrow \sA^{+}_g ,\Theta)$ be the universal pair over $\sA^{+}_g\rightarrow \Spec(R)$. The canonical key formula gives by duality an isomorphism
 $$
 \left(\pi_*\Theta\right)^{-2} \simeq \cM(\Theta)\otimes\underline{\omega},
 $$
 where $\cM(\Theta) = \cM_g$ and $\underline{\omega}$ is the Hodge bundle of $\pi:\sA\rightarrow \sA^{+}_g$. The {\em bundle of half-forms} 
 $$
 \underline{\omega}^{1/2}_{\Theta} := \sqrt{\cM(\Theta)\otimes\underline{\omega}},
 $$
 a square-root which a priori can only be defined over a $\mu_2$-gerbe over $\sA^{+}_g$, descends to a well-defined invertible sheaf over $\sA^{+}_g$, together with an isomorphism 
  \begin{equation}
  \label{equation:keyFormulaSquareRoot}
 \iota_{\mathrm{alg}}: \left(\pi_*\Theta\right)^{-1} \stackrel{\simeq}\rightarrow  \underline{\omega}^{1/2}_{\Theta}.
 \end{equation}
The algebraic Riemann theta function is a section of  $\left(\pi_*\Theta\right)^{-1}$, defined as follows (e.g. \cite{FaltingsChai}, \S I.5): since $\Theta$ is normalized, there is a well-defined `evaluation-at-the-identity' map 
$
\mathrm{ev}: e^*\Theta \longrightarrow \cO_{\sA^{+}_g}
$
which by adjunction gives a section $\vartheta_g$ of $(\pi_*\Theta)^{-1} = \Hom(\pi_*\Theta, \cO_{\sA^{+}_g})$. By \eqref{equation:keyFormulaSquareRoot}, we know that $\vartheta_g$ maps isomorphically to a section of $ \underline{\omega}^{1/2}_{\Theta}$, an `algebraic modular form' of weight 1/2 over $\sA^{+}_g$.

\subsection{}
\label{section:classicalFormulas} 

Over the category of analytic spaces, isomorphism classes of line bundles over the analytic quotient stack $\left[\Gamma^{+}_g(1,2)\backslash \mathfrak{h}_g\right] = \sA^{+}_{g,\rm{an}}$ are in 1-1 correspondence with group cohomology classes in $H^1(\Gamma^{+}_g(1,2), \cO^{\times}_{\mathfrak{h}_g})$, since $\mathfrak{h}_g$ is a Stein manifold. In particular, let $\mathrm{pr}: \mathfrak{h}_g\rightarrow  \sA^{+}_{g,\rm{an}}$ be the projection map, classifying the universal abelian variety $$
A_g:=\CC^g\times \mathfrak{h}_g/\Lambda_g,\quad \Lambda_g = \{(z,\tau)\in \CC^g\times\mathfrak{h}_g : z\in \ZZ^g + \tau\ZZ^g\},$$ together with the symmetric line bundle $Th$ over $A_g\rightarrow \mathfrak{h}_g$ defined by the divisor of the (2-variable) Riemann theta function $\sum_{n\in \ZZ^g} e^{\pi i\, n^t\tau n + 2\pi i n^tz } 
$. The line bundle $\mathrm{pr}^* \underline{\omega}^{1/2}_{\Theta}$ is trivialized over $\mathfrak{h}_g$ by the section 
$$
s_g := \sqrt{c\,dz_1\cdots dz_g},
$$
where $c$ is a constant section of $\cM(Th)$, the theta multiplier bundle over the universal abelian variety $A_g$, and $z=(z_1, \ldots, z_g)$ are coordinates on $\CC^g$. This choice of trivialization corresponds to the 1-cocycle  $j_{1/2}\in Z(\Gamma_g(1,2)^+,\cO_{\mathfrak{h}_g^{\times}})$ given by
$$
j_{1/2}: \gamma = \smalltwobytwo{A}{B}{C}{D}  \longmapsto \sqrt{\lambda(\gamma)\cdot \det(C\tau + D)},
$$
where $\lambda: \Gamma^{+}_g(1,2) \rightarrow \mu_4$ is the character obtained from the $\lambda$ of Section \ref{section:thetaMultipliers} by factoring through the principal congruence subgroup $\Gamma_g(4)$ (technically, this is a 1-cocycle of the metaplectic cover $\widetilde{\Gamma}^{+}_g(1,2)$, but formula \eqref{equation:keyFormulaSquareRoot} shows that the square-roots can be chosen in a compatible way so that the resulting cocycle descends to $\Gamma^{+}_g(1,2)$). On the other hand, the line bundle $\pr^*(\pi_*\Theta)^{-1}$ is trivialized over $\mathfrak{h}_g$ by Riemann's theta function $\vartheta_g(\tau) = \sum_{n\in \ZZ^g} e^{\pi i\, n^t\tau n} $, giving rise to a 1-cocycle
$$
\gamma \longmapsto \vartheta_g(\gamma\tau)\vartheta_g(\tau)^{-1}.
$$
By the classical functional equation of Riemann's theta function, the composition of the two trivializations $s_g$ and $\vartheta_g$ is a $\Gamma^+(1,2)$-invariant isomorphism
$$
 \iota_{\mathrm{an}}: \mathrm{pr}^*(\pi_*\Theta)^{-1}\stackrel{\simeq}\longrightarrow \mathrm{pr}^* \underline{\omega}^{1/2}_{\Theta},
$$
and thus it must descend to an isomorphism over the quotient $\sA^{+}_{g,\rm{an}}$. Another such isomorphism is given by the base-change $\iota_{\rm{alg},\CC}$ of \eqref{equation:keyFormulaSquareRoot} from $R = \ZZ[1/2,i]$ to $\CC$. The composition $\iota_{\rm{alg},\CC}\circ\iota_{\rm{an}}^{-1}$ is then a non-vanishing analytic function on $\sA^{+}_{g,\rm{an}}$ (compatible under products of abelian varieties) and is thus constant (\cite{MB:FEqn}, 1.4.1). In fact, the arguments of \cite{MB:FEqn} can be employed to show that $\iota_{\rm{alg},\CC}= \pm \iota_{\rm{an}}$. In this sense, the isomorphism $\iota_{\rm{alg}}$ of \eqref{equation:keyFormulaSquareRoot} can be viewed as the algebraic analog to the functional equation of Riemann's theta function, since the existence of the functional equation is equivalent to the existence of the $\Gamma_g^+(1,2)$-equivariant isomorphism $\iota_{\rm{an}}$.

\end{document}